\newcommand{\R}{\mathbb{R}}
\newcommand{\D}{\mathcal{D}} 
\newcommand{\Z}{\mathbb{Z}}
\newcommand{\N}{\mathbb{N}}
\newcommand{\E}{\mathbb{E}}
\newcommand{\J}{\mathcal{J}} 
\newcommand{\A}{\mathcal{A}}
\newcommand{\Prob}{\mathbb{P}}
\newcommand{\I}[1]{\mathbbm{1}{\left( #1 \right)}} 
\newcommand{\T}{\prime} 
\newcommand{\up}[1]{{(#1)}}
\newcommand{\e}[1]{e^{\up{#1}}}
\newcommand{\cc}{r} 
\newcommand{\uu}{{\up{\cc}}} 
\newcommand{\inner}[1]{\langle #1 \rangle}
\providecommand{\abs}[1]{\left\lvert#1\right\rvert}
\providecommand{\norm}[1]{\lVert#1\rVert}
\providecommand{\Norm}[1]{\left\lVert#1\right\rVert}
\newtheorem{theorem}{Theorem}
\newtheorem{lemma}[theorem]{Lemma}
\theoremstyle{definition}
\numberwithin{equation}{section}
\newtheorem{remark}{Remark}
\newtheorem{proposition}[theorem]{Proposition}
\newtheorem{assumption}{Assumption}
\title{Steady-State Convergence of the Continuous-Time Routing System with General Distributions in Heavy Traffic}
\author{Jin Guang$^1$\\{\small \href{mailto:jinguang@link.cuhk.edu.cn}{jinguang@link.cuhk.edu.cn}} \and Yaosheng Xu$^2$\\ {\small \href{mailto:yaosheng.xu@chicagobooth.edu}{yaosheng.xu@chicagobooth.edu}} \and J. G. Dai$^3$\\ {\small \href{mailto:jd694@cornell.edu}{jd694@cornell.edu}}   }
\date{{
$^1$School of Data Science, The Chinese University of Hong Kong, Shenzhen, China}\\%
$^2$Booth School of Business, University of Chicago, Chicago, Illinois\\
$^3$ School of Operations Research and Information Engineering, Cornell University, Ithaca, New York} 
\begin{document}




\maketitle

\begin{abstract}
This paper examines a continuous-time routing system with general interarrival and service time distributions, operating under the join-the-shortest-queue and power-of-two-choices policies. Under a weaker set of assumptions than those commonly found in the literature, we prove that the scaled steady-state queue length at each station converges weakly to an identical exponential random variable in heavy traffic. Specifically, our results hold under the assumption of the $(2 + \delta_0)$th moment for the interarrival and service distributions with some $\delta_0 > 0$. The proof leverages the Palm version of the basic adjoint relationship (BAR) as a key technique.

\end{abstract}

\textbf{Keywords}: {Heavy-traffic approximation, State-space collapse, Performance analysis, Load balancing networks}

\section{Introduction}

We study a continuous-time queueing system consisting of a single arrival source and $J$ parallel service stations, each equipped with a single server and an infinite waiting queue. Jobs arrive according to a renewal process, and service times at each station are independent and identically distributed (i.i.d.)~with general distributions. When a job arrives at the system, it is immediately routed to one of the $J$ stations based on the information of all queue lengths. Upon arrival, the system implements one of two routing policies: the join-the-shortest-queue (JSQ) policy, as explored by \citet{Wins1977} and \citet{Webe1978}, or the power-of-two-choices (Po2) policy, as discussed by \citet{VvedDobrKarp1996} and \citet{Mitz1996}. Under the JSQ policy, jobs are routed to the server with the shortest queue, promoting queue length equalization and minimizing waiting times. Alternatively, the Po2 policy routes jobs to the shorter of two randomly selected servers, offering a simpler decision-making procedure while still maintaining effective load balancing. In practice, the choice between JSQ and Po2 policies enables a trade-off between optimal queue length balancing and reduced routing overhead, depending on system requirements.

In this paper, we demonstrate that the scaled steady-state queue length at each station converges weakly to a common exponential random variable in heavy traffic. Specifically, we consider a sequence of routing systems indexed by $r \in (0, 1)$. Under the heavy traffic condition parameterized by $r \to 0$ with a fixed number of servers $J$, we prove that if the interarrival and service times have finite $(2+\delta_0)$th moments for some $\delta_0>0$, then
\begin{equation*} 
    \left( rZ^\uu_1, \ldots, rZ^\uu_J \right) \Rightarrow \left( Z^*, \ldots, Z^* \right),
\end{equation*}
where $\Rightarrow$ denotes the weak convergence, $Z^\uu_j$ denotes the steady-state queue length at station $j$ in the $r$th system, and $Z^*$ is an exponential random variable. This result hinges on the observation that the steady-state queue length vector collapses onto the line where all queue lengths are equal, with deviations from this line uniformly bounded. This phenomenon in the queueing systems is known as state-space collapse (SSC).

Prior work on state-space collapse and heavy traffic limits for routing systems has been conducted under the discrete-time framework, including the drift method \citep{EryiSrik2012, MaguSrikYing2014}, transform method \citep{HurtMagu2020} and the Stein's method \citep{ZhouShro2020}. However, these studies heavily rely on the assumption that the interarrival and service times have bounded support, thereby implying the boundedness of all moments.  In our work, we relax this assumption and consider a continuous-time system, demonstrating that the existence of only the $(2+\delta_0)$th moments of the interarrival and service is sufficient to guarantee steady-state convergence in heavy traffic. 

Our result is built on a novel methodology known as the basic adjoint relationship (BAR) approach. One of the key advantages of the BAR approach is its ability to directly characterize the stationary distribution of a queueing system, thereby bypassing the discussion on its transient dynamics. This approach has been successfully applied in recent studies \citep{BravDaiMiya2017, BravDaiMiya2023, DaiGlynXu2023, GuanChenDai2024} to establish SSC or weak convergence in various queueing systems.

The paper is organized as follows. In Section \ref{sec: setting}, we introduce the routing system configured with either the JSQ or Po2 policy and present the heavy traffic framework. Section \ref{sec: result} presents our main results, where we establish state-space collapse (SSC) and weak convergence of the scaled steady-state queue lengths. The BAR approach, which underpins our methodology, is introduced in Section \ref{sec: bar}. The proof of SSC is provided in Section \ref{sec: ssc}, followed by the proof of weak convergence in Section \ref{sec:weak}.

\section{Model} \label{sec: setting}
We use $\N$ to denote the set of positive integers $\{1,2,\ldots\}$. For $a,b\in \R$, let $a\wedge b\equiv \min\{a,b\}$ and $a\vee b\equiv \max\{a,b\}$. For a vector $a\in \R^J$, let $a_{\min}\equiv \min_{j=1,\ldots,J} a_j$, $a_{\max}\equiv \max_{j=1,\ldots,J} a_j$, and $a_{\Sigma}\equiv \sum_{j=1}^J a_j$. Additionally, $\e{j}$ represents a $J$-dimensional vector with the $j$th element being $1$ and all other elements being $0$. 

We consider a system with one source of arrival routing to $J$ parallel stations, indexed by $j\in \J \equiv \{1, \ldots, J\}$. We denote an i.i.d.~sequence of random variables $\{T_{e}(i), i\in \N\}$ and a real number $\alpha>0$ to be associated with the arrival, and an i.i.d.~sequence of random variables $\{T_{s,j}(i), i\in \N\}$ and a real number $\mu_j>0$ associated with each station $j \in \J$. All of the above are defined on a common probability space $(\Omega, \mathscr{F}, \mathbb{P})$. We assume these $J+1$ sequences
\begin{equation} \label{eq: unitized}
    \{T_{e}(i), i\in \N\}, \quad \{T_{s,j}(i), i\in \N\}_{j\in \J},
\end{equation}
are independent and unitized, meaning $\mathbb{E}[T_{e}(1)]=1$ and $\mathbb{E}[T_{s,j}(1)]=1$ for all $j\in \J$. For each $i \in \N$, we denote by $T_e(i)/\alpha$ the interarrival time between the $i$th and $(i + 1)$th arriving jobs, and by $T_{s,j}(i)/\mu_j$ the service time of the $i$th job at station $j$. Here, $\alpha$ represents the arrival rate, and $\mu_j$ is the service rate at station $j$. We assume the following moment conditions on the interarrival and service time distributions.

\begin{assumption} \label{ass: moment condition}
    The interarrival and service times have finite $(2+\delta_0)th$ moments for some $\delta_0 > 0$. Specifically, we assume
    \begin{equation*} 
        \E\left[T_{e}^{ 2+\delta_0  }{(1)}\right]<\infty, \quad \E\left[T_{s,j}^{ 2+\delta_0}{(1)}\right]<\infty\text{ for  }j \in \J.
    \end{equation*}
\end{assumption}

We denote by $Z_j(t)$ the queue length at station $j$ at time $t\geq 0$, including the possibly one in service, and by $Z(t)\equiv(Z_1(t),\ldots,Z_J(t))$ the vector of queue lengths. The routing decision for an arriving job at time~$t$ is given by
\begin{equation} \label{eq: ut}
    u(t) = a(Z(t_-), U(N_e(t))),
\end{equation}
where $a:\Z_+^J \times [0,1] \to \J$ is a function at time~$t$, $Z(t_-)$ represents the queue length at time~$t$ before the routing decision is made, $N_e(t)$ denotes the number of arrivals during $(0,t]$, which is also the number of decisions made by the system manager, and $\{U(i), i\in \N\}$ is a sequence of i.i.d.~random variables uniformly sampled on $[0,1]$. By allowing the decision $u(t)$ to depend on $U$, we are able to align the system with two randomized routing policies. In this paper, we focus on the join-the-shortest-queue (JSQ) policy and the power-of-two-choices (Po2) policy.  

The JSQ policy assigns the arriving job to the station with the shortest queue length, i.e., $u(t) = \operatorname*{argmin}_{j\in \J} Z_j(t_-)$, where ties are broken arbitrarily. The JSQ policy requires knowledge of the queue lengths at all stations, which can be costly when the number of stations is large in some communication networks. An alternative algorithm is the Po2 policy, which assigns the arriving job to the station with the shorter queue length between two randomly chosen stations. Specifically, $u(t) = \operatorname*{argmin}_{j\in \{j_1, j_2\}} Z_j(t_-)$, where $j_1, j_2$ are two random stations sampled from $\J$ with $j_1\neq j_2$. The sampling follows probabilities proportional to their service rates. Ties are broken arbitrarily. The probability of choosing station $j$ is $p_j\equiv \mu_j/\mu_{\Sigma}$ and the probability of choosing station $j_1$ and $j_2$ with $j_1\neq j_2$ is $p_{j_1 j_2} \equiv p_{j_1} {p_{j_2}}/{(1-p_{j_1})} + p_{j_2} {p_{j_1}}/({1-p_{j_2}})$. The Po2 policy only requires queue length information from two stations, making it more practical for large-scale systems.

The routing system can be modeled as a Markov process as follows. For time $t \geq 0$, we denote by $R_e(t)$ the residual time until the next arrival to the system and denote 
\begin{equation} \label{eq: R_s}
	R_{s,j}(t) = \begin{cases}
		\text{residual service time for the job being processed at station $j$}, & \text{if } Z_j(t) > 0, \\
		\text{service time of the next job to be processed at station $j$}, & \text{if } Z_j(t) = 0.
	\end{cases}
\end{equation}
We write $R_s(t)$ as a $J$-dimensional vector whose $j$th element is $R_{s,j}(t)$. For any given $t \geq 0$, we set
\begin{equation*}
    X(t) = \left( Z(t), R_e(t), R_s(t) \right).
\end{equation*}
Note that the remaining times in $X(t)$ are solely used for analysis purposes, and the decision makers depend only on the queue length information in \eqref{eq: ut}. In fact, they may not have access to other detailed information beyond the queue length. $\{X(t), t \geq 0\}$ is a Markov process with respect to the filtration $\mathbb{F}^X = \{\mathcal{F}^X_t, t \geq 0\}$ defined on the state space $\mathbb{S} = \Z_+^J \times \R_+ \times \R_+^J$, where $\mathcal{F}^X_t = \sigma(\{X(s), 0 \leq s \leq t\})$. We assume that each sample path of the process $\{X(t), t \geq 0\}$ is right-continuous with left limits.

To carry out the heavy traffic analysis, we consider a sequence of routing systems indexed by $r\in (0,1)$. To keep the presentation clean, we set the arrival rate as the only parameter depending on $r$ and denote by $\alpha^\uu$ the arrival rate for the $r$th system. Specifically, we denote
\begin{equation} \label{eq: heavy traffic}
    \alpha^\uu = \mu_{\Sigma} - r ,
\end{equation}
All the other parameters are assumed to be independent of $r$, including the service rates $\{\mu_j\}_{j\in\J}$, unitized interarrival and service times specified in~\eqref{eq: unitized}. Then the traffic intensity $$\rho^\uu \equiv \alpha^\uu / \mu_{\Sigma} \to~1, \quad \text{as } r\to 0,$$ which means the system is in heavy traffic. We then denote by $\{X^\uu(t), t \geq 0\}$ the corresponding Markov process in the $r$th system. Our discussion is based on the steady-state behavior, which motivates us to make the following assumption. The following assumption can be proved under some mild distributional assumptions on interarrival times; see \citet{Dai1995} and \citet{Bram2011}.

\begin{assumption} \label{ass: stability}
    For each $r \in (0, 1)$, the Markov process $\{X^\uu(t), t \geq 0\}$ is positive Harris recurrent and it has a unique stationary distribution $\pi^\uu$.
\end{assumption}

For $r \in (0, 1)$, we denote by 
$$X^\uu = (Z^\uu, R^\uu_e, R^\uu_{s} )$$ 
the random vector that follows the stationary distribution. To simplify the notation, we use $\E_{\pi}[\cdot]$ (rather than $\E_{\pi^\uu}[\cdot]$) to denote the expectation with respect to the stationary distribution when the index $r$ is clear from the context.

\section{Main Results} \label{sec: result}

In this section, we demonstrate our main result: for either JSQ or Po2 policy, the vector of the scaled steady-state queue length $rZ^\uu$ weakly converges to a vector whose elements are the same exponential random variable $Z^*$ in heavy traffic.

\begin{theorem} \label{thm: main result}
    Suppose Assumptions \ref{ass: moment condition} and \ref{ass: stability} hold. Under the JSQ or Po2 policy, as $r\to 0$, we have
	\begin{equation*} 
		\left( rZ^\uu_1, \ldots, rZ^\uu_J \right) \Rightarrow \left( Z^*, \ldots, Z^* \right),
	\end{equation*}
    where $Z^*$ is an exponential random variable with mean
    \begin{equation} \label{eq: mean}
        m = \frac{1}{2J}\sum_{j\in \J}\mu_j\left(c_e^2  + c_{s,j}^2 \right).
    \end{equation}
    Here, $c_e^2$ is the squared coefficient of variation (SCV) of the interarrival time, and $c_{s,j}^2$ is the SCV of the service time at station $j$. 
\end{theorem}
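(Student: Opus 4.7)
My plan is to apply the Palm-version BAR approach of \citet{BravDaiMiya2017} and \citet{GuanChenDai2024} to the Markov process $\{X^\uu(t), t\ge 0\}$, extracting the Laplace transform of the limit directly from a fixed-point equation, and to support this by a state-space collapse (SSC) estimate proved by a separate Lyapunov-type BAR argument. I would proceed in three steps.

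First I would derive the Palm BAR identity for $\pi^\uu$. The generator of $X^\uu$ acts as $\mathcal{A}^\uu f = -\partial_{r_e}f - \sum_{j\in\J}\I{z_j>0}\partial_{r_{s,j}}f$ in the interior, with jumps at the boundaries $r_e=0$ (JSQ routing plus resampling of $T_e(1)/\alpha^\uu$) and $r_{s,j}=0$ (service completion plus resampling of $T_{s,j}(1)/\mu_j$). Averaging $\mathcal{A}^\uu f=0$ against $\pi^\uu$ and expressing the boundary contributions through Palm distributions at arrival and service-completion epochs yields an identity that, applied to the product-exponential test function
\[
    f_\theta(z,r_e,r_s) = \exp\!\Bigl(-\theta r\sum_{j\in\J} z_j - \eta^\uu(\theta)\, r_e - \sum_{j\in\J}\xi_j^\uu(\theta)\, r_{s,j}\Bigr),
\]
with $\eta^\uu(\theta)$ and $\xi_j^\uu(\theta)$ tuned so the residual-clock drifts are absorbed into the jump contributions, reduces to a scalar equation for $\psi^\uu(\theta)=\E_\pi\bigl[\exp(-\theta r \sum_{j\in\J} Z^\uu_j)\bigr]$, up to error terms governed by queue-length deviations and residual clocks.

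Second I would establish the SSC bound $\sup_{r\in(0,1)}\E_\pi\bigl[(Z^\uu_j-Z^\uu_k)^2\bigr]<\infty$ for all $j,k\in\J$. Taking $V(z)=\sum_{j<k}(z_j-z_k)^2$ as the Lyapunov function, the JSQ rule forces every arrival to reduce $V$ whenever the queues are unbalanced, producing a strictly negative drift, while service completions contribute bounded positive increments. Applying BAR to $V$, handling the residual-clock cross-terms by Young's inequality, and controlling the tails of the jump-size contributions through the $2+\delta_0$ moment assumption delivers the required uniform bound, and hence $r(Z^\uu_j-Z^\uu_k)\to 0$ in probability as $r\to 0$.

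Third I would perform the heavy-traffic expansion. Using the parameterization $r=\sum_{j\in\J}\mu_j-\alpha^\uu$ and Taylor-expanding the Laplace exponents of the renewal distributions inside the Palm BAR identity, the $O(r)$ drift terms cancel by construction and the $O(r^2)$ terms assemble into the variance coefficient $m$ in~\eqref{eq: mean}; the SSC bound from Step~2 permits the in-exponent replacement $\sum_{j\in\J} Z^\uu_j \approx J Z^\uu_1$. Passing to the limit $r\to 0$, any subsequential limit $\psi(\theta)=\lim_{r\to 0}\psi^\uu(\theta/J)$ must satisfy $(1+m\theta)\psi(\theta)=1$, which identifies the limit as the Laplace transform of an exponential random variable with mean $m$; together with SSC this yields the claimed joint weak convergence. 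The main obstacle will be Step~2: producing a uniform-in-$r$ second-moment SSC bound with only $2+\delta_0$ moments, since the BAR drift of $V$ couples queue-length deviations with residual clocks and jump sizes that are not a priori bounded, so the jump contributions must be tamed by a careful truncation-plus-H\"older argument whose error scales sub-quadratically in $r^{-1}$; a secondary hurdle is the uniform integrability required to justify the Taylor expansion in Step~3, which again reduces to moment estimates permitted by Assumption~\ref{ass: moment condition}.
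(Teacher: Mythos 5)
Your overall architecture (Palm BAR with a tuned product-exponential test function for the limit MGF, plus a separate BAR/Lyapunov SSC estimate, plus a heavy-traffic expansion of the Laplace exponents) matches the paper's, but Step 2 contains a genuine gap. You claim a uniform-in-$r$ second-moment bound $\sup_{r}\E_\pi[(Z^\uu_j-Z^\uu_k)^2]<\infty$ from the quadratic Lyapunov function $V(z)=\sum_{j<k}(z_j-z_k)^2$. A quadratic test function in the BAR produces a negative drift term that is only \emph{linear} in the imbalance (of order $\Norm{Z^\uu_\perp}$), so after bounding the jump and residual-clock contributions it can only deliver a first-moment bound on $\Norm{Z^\uu_\perp}$, not a second-moment bound; to reach the $p$-th moment one needs test functions of degree $p+1$ together with correspondingly higher moments of the interarrival and service times. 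This is precisely why the paper runs an induction with the corrected test functions \eqref{eq: func_s1} (the cross terms $z_\perp^\T u(z)\,\alpha^\uu r_e\Norm{z_\perp}^{n-1}$ and $z_\perp^\T(\mu\circ r_s)\Norm{z_\perp}^{n-1}$ absorb the residual-clock oscillation) and the auxiliary Palm-measure bounds (S2)--(S4): under the $2+\delta_0$ moment assumption this machinery only yields exponent $1+\delta_0/(1+\delta_0)<2$ (Proposition \ref{prop: ssc}), and by the paper's Remark a second-moment SSC would require roughly $3+\delta_0$ moments. So the bound you assert is both unobtainable by your sketched argument and stronger than what the standing assumptions are known to support.

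The second, related issue is that you have not identified where SSC is actually needed with an exponent strictly larger than one. In the final identification of the limit, the boundary (idle-station) terms involve conditional MGFs given $Z^\uu_j=0$, and since $\mathbb{P}(Z^\uu_j=0)=O(r)$ one must show these tend to $1$ by a H\"older argument of the form $r\,\E_\pi[\Norm{Z^\uu_\perp}\I{Z^\uu_j=0}]/\mathbb{P}(Z^\uu_j=0)\le r\,\E_\pi[\Norm{Z^\uu_\perp}^{1+\varepsilon_0}]^{1/(1+\varepsilon_0)}\mathbb{P}(Z^\uu_j=0)^{-1/(1+\varepsilon_0)}\to 0$ (this is the paper's Lemma \ref{lmm: one}). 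A plain first-moment bound — which is all your quadratic $V$ can give — is not sufficient here; you need a uniform $(1+\varepsilon_0)$-moment bound for some $\varepsilon_0>0$, which is exactly the content of Proposition \ref{prop: ssc} and the reason the paper's inductive, residual-clock-corrected BAR argument (rather than a one-shot Young/truncation estimate) is required. Your Steps 1 and 3 are essentially the paper's route (modulo the truncation of the residual clocks in the test function, which the paper uses to justify the expansions under $2+\delta_0$ moments and which you flag only as a hurdle), so repairing Step 2 along these lines would align your proof with the paper's.
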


We recall that for a positive random variable $T$, its SCV, denoted as $c^2(T)$, is defined as $c^2(T) = {\operatorname*{var}(T)}/{(\E[T])^2}$.

To prove Theorem \ref{thm: main result}, we first establish SSC and weak convergence of the scaled average queue length as follows. To establish SSC, we show that the queue lengths concentrate along the vector $e\equiv (1,\ldots,1)$. We denote the components of the vector $z$ that are parallel and perpendicular to the vector $e$ by
\begin{equation*} 
    z_{\parallel} = \frac{\inner{z,e}}{\norm{e}^2}e=\bar{z} \cdot e , \quad z_{\perp} = z - z_{\parallel} = (z_j - \bar{z})_{j\in \J},
\end{equation*}
where the average $\bar{z} \equiv z_{\Sigma} /J= \sum_{j\in \J} z_j /J$ and the norm $\norm{\cdot}$ represents the Euclidean norm. 

\begin{proposition}[State-Space Collapse] \label{prop: ssc}
    Suppose Assumptions \ref{ass: moment condition} and \ref{ass: stability} hold.  Under the JSQ or Po2 policy, the difference between the queue length and the average queue length is uniformly bounded, i.e.,
    \begin{equation*} 
        \sup_{r\in (0,r_0)} \E\left[ \Norm{Z^\uu_{\perp}}^{1 + \delta_0 / (1 + \delta_0)} \right] < \infty,
    \end{equation*}
    where $r_0$ is a constant in $(0,1)$ depending on the policy. 
\end{proposition}

\begin{remark}
    Proposition \ref{prop: ssc} is sufficient to support Theorem~\ref{thm: main result}. In general, if the moment condition in Assumption \ref{ass: moment condition} is strengthened to $M+\delta_0$, this SSC result can be extended to $M+\delta_0/(M+\delta_0)$, as discussed in Section \ref{sec: ssc}.
\end{remark}

\begin{remark}
	The choice of $r_0$ is related to the routing policy. Specifically, we set $r_0=1\wedge\mu_{\min} / 2$ for the JSQ policy and set $r_0=1\wedge\mu_{\min}/2\wedge \mu_\Sigma/(1 + 2\sqrt{J}/\min_{j_1<j_2}p_{j_1j_2})$ for the Po2 policy.
\end{remark}

\begin{proposition} \label{prop: weak convergence}
    Suppose Assumptions \ref{ass: moment condition} and \ref{ass: stability} hold.  Under the JSQ or Po2 policy, as $r\to 0$, we have
    \begin{equation*} 
        r \bar{Z}^\uu \Rightarrow Z^*,
    \end{equation*}
    where $Z^*$ is an exponential random variable defined in \eqref{eq: mean}.
\end{proposition}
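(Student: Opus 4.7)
My plan is to apply the BAR methodology with a family of exponentially weighted test functions to derive an equation for the limiting Laplace transform $\psi(\theta)=\lim_{r\to 0}\E_\pi[e^{-\theta r\bar Z^\uu}]$, and then conclude weak convergence via L\'evy's continuity theorem.

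The test function I would use is
\begin{equation*}
    f_\theta(z,r_e,r_s)=e^{-\theta r\bar z}\cdot h_e(r_e)\prod_{j\in\J} h_{s,j}(r_{s,j}),
\end{equation*}
where $h_e$ and $h_{s,j}$ are, to leading order, exponentials $e^{-\xi_e r_e}$ and $e^{-\xi_{s,j} r_{s,j}}$ with $O(r)$ exponents. The exponents $\xi_e=\xi_e(\theta,r)$ and $\xi_{s,j}=\xi_{s,j}(\theta,r)$ are chosen so that $f_\theta$ lies in the domain of the generator of $\{X^\uu(t)\}$; matching the jumps at $r_e=0$ and $r_{s,j}=0$ reduces to the algebraic relations $\E[e^{-\xi_e T_e/\alpha^\uu}]=e^{\theta r/J}$ and $\E[e^{-\xi_{s,j}T_{s,j}/\mu_j}]=e^{-\theta r/J}$, which, by second-order Taylor expansion using $\E[T_e^2]=1+c_e^2$ and $\E[T_{s,j}^2]=1+c_{s,j}^2$ (both finite by~\eqref{eq: moment condition}), give
\begin{equation*}
    \xi_e=-\frac{\alpha^\uu\theta r}{J}+\frac{\alpha^\uu\theta^2 r^2 c_e^2}{2J^2}+O(r^3),\qquad \xi_{s,j}=\frac{\mu_j\theta r}{J}+\frac{\mu_j\theta^2 r^2 c_{s,j}^2}{2J^2}+O(r^3).
\end{equation*}

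With $f_\theta$ in the domain, the stationary BAR reduces to $\E_\pi\big[(\xi_e+\sum_{j\in\J}\I{Z_j^\uu>0}\xi_{s,j}) f_\theta(X^\uu)\big]=0$. Substituting the expansions and using $\alpha^\uu=\sum_{j\in\J}\mu_j-r$, the $O(r)$ part of the bracket equals $(\theta r/J)\big(r-\sum_{j\in\J}\I{Z_j^\uu=0}\mu_j\big)$, and the $O(r^2)$ part equals $(\theta^2 r^2/(2J^2))\big(\alpha^\uu c_e^2+\sum_{j\in\J}\I{Z_j^\uu>0}\mu_j c_{s,j}^2\big)$, which converges to $\theta^2 r^2 m/J$ in view of~\eqref{eq: mean}. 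Proposition~\ref{prop: ssc} then plays the decisive role: on $\{Z_j^\uu=0\}$, $\bar Z^\uu\leq\Norm{Z_\perp^\uu}$, so $r\bar Z^\uu\to 0$ and $f_\theta\to 1$ in $L^1(\pi)$ on this event; combined with the flow balance $\sum_{j\in\J}\mu_j\Prob_\pi(Z_j^\uu=0)=r$, the $O(r)$ term yields $(\theta r^2/J)(\psi^\uu(\theta)-1)+o(r^2)$ and the $O(r^2)$ term yields $(\theta^2 m r^2/J)\psi^\uu(\theta)+o(r^2)$, where $\psi^\uu(\theta)=\E_\pi[e^{-\theta r\bar Z^\uu}]$. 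Dividing by $r^2/J$ and passing to the limit along any subsequence, any limit $\psi(\theta)$ of $\psi^\uu(\theta)$ satisfies $\theta(\psi(\theta)-1)+\theta^2 m\psi(\theta)=0$, i.e.\ $\psi(\theta)=1/(1+\theta m)$, the Laplace transform of an exponential random variable with mean $m$.

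To invoke L\'evy's continuity theorem, I would establish tightness of $\{r\bar Z^\uu\}$ via a separate BAR argument with polynomial test functions $f(z)=\bar z$ and $f(z)=\bar z^2$, again using Proposition~\ref{prop: ssc} to control boundary terms. The main obstacle I foresee is the rigorous control of the $o(r^2)$ remainders and Palm corrections under only a $2+\delta_0$ moment assumption: the exponential-in-residual test function above is only formal (the MGF of $R_e^\uu$ need not exist), so a careful truncation or polynomial-approximation argument will be required to make the expansions rigorous while preserving the correct SCV coefficients inherited from $c_e^2$ and $c_{s,j}^2$.
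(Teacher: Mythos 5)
Your proposal is correct and follows essentially the same route as the paper: exponential test functions in the queue length multiplied by (truncated) exponentials of the residual times with exponents chosen to annihilate the Palm jump terms, second-order expansions producing the SCVs, and SSC (Proposition~\ref{prop: ssc}) together with the flow balance $\sum_{j\in\J}\mu_j\Prob(Z_j^\uu=0)=r$ to show the boundary terms contribute as if $f_\theta=1$ on $\{Z_j^\uu=0\}$, which is exactly the paper's Lemma~\ref{lmm: one}. The truncation you flag as the missing technical ingredient is precisely the paper's device (residuals capped at $r^{\varepsilon_0-1}$, with the resulting error terms shown to be $o(r^2)$), and your separate tightness step is unnecessary since convergence of Laplace transforms to $1/(1+\theta m)$ already yields weak convergence.
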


Proposition \ref{prop: ssc} and Markov's inequality imply that for any station $j\in \J$, $rZ_j^\uu - r\bar{Z}^\uu$ converges to $0$ in probability. Consequently, Theorem \ref{thm: main result} follows directly from Proposition~\ref{prop: weak convergence}.

Throughout the rest of the paper, our proof utilizes the BAR, which will be introduced in Section~\ref{sec: bar}. The proofs of Propositions \ref{prop: ssc} and \ref{prop: weak convergence} will be presented in Sections \ref{sec: ssc} and \ref{sec:weak}, respectively.

\section{Basic Adjoint Relationship} \label{sec: bar}
For our analysis, we introduce the Basic Adjoint Relationship (BAR) of the routing system, which enables us to directly characterize the stationary distribution of the Markov processes $\{X^\uu(t), t \geq 0\}$.

Let $\D$ be the set of bounded functions $f: \mathbb{S} \rightarrow \mathbb{R}$ satisfying the following conditions: for any $z \in \mathbb{Z}_{+}^J$, (a) the function $f(z,\cdot,\cdot):\R_+\times \R_+^J\rightarrow \R$ is continuously differentiable at all but finitely many points; (b) the derivatives of $f(z,\cdot,\cdot)$ in each dimension have a uniform bound over~$z$.

For a routing policy whose Markov process $\{X(t),t\geq 0\}$ is defined in Section \ref{sec: setting}, the stationary distribution $\pi$ satisfies the following balance equation:
\begin{equation*}
    \E_{\pi}\left[ f(X(1)) - f(X(0)) \right] = 0, \quad \forall~ f\in \D,
\end{equation*}
where $X(0)$ follows the stationary distribution $\pi$.
Hence, by applying the fundamental theorem of calculus to expand $f(X(1))-f(X(0))$ and introducing the concept of the Palm probability measure, a conditional distribution of a point process given an event occurs at a specified time, we obtain the BAR for the routing system as similarly proposed in (6.16) of \citet{BravDaiMiya2023}: for any $f\in \D$,
\begin{align}\label{eq:BAR24}
    \E_{\pi}\left[  \A f(X ) \right] + \alpha \E_{e} \left[   f(X_+^{(e)}) - f(X_-^{(e)})  \right] + \sum_{j\in \J} \lambda_j \E_{s,j} \left[   f(X_+^{(s,j)}) -  f(X_-^{(s,j)})  \right] = 0,
\end{align}
where $\lambda_j=\mu_j \mathbb{P}(Z_j>0)$ is the departure rate at station $j$.
Here for the first term of \eqref{eq:BAR24}, $X$ follows the stationary distribution $\pi$ and the ``interior operator'' $\A$ is defined as
\begin{equation} \label{eq: A}
    \A f(x)=-\frac{\partial f}{\partial r_{e}}( x)-\sum_{j \in \J} \frac{\partial f}{\partial r_{s,j}}( x) \I{z_j>0},\quad x=(z,r_{e},r_{s})\in \mathbb{S},
\end{equation}
In the absence of external arrival or service completion events, the first term in \eqref{eq: A} indicates that the residual times for external arrivals decrease at a rate of \(-1\), while the second term in \eqref{eq: A} indicates that when the corresponding station has a positive number of jobs, the residual service times decrease at a rate of \(-1\); otherwise, the decreasing rate is \(0\). From this perspective, the operator $\A f$ in \eqref{eq: A} corresponds to the evolution of the routing system between jumps.

The remaining terms in \eqref{eq:BAR24} describe the state changes resulting from external arrivals and service completions, utilizing the Palm probability measure. Mathematically, the Palm probability measure is a conditional distribution of a general point process given it has a point at a specific location; see, for instance, \citet{BaccBrem2003} and \citet{Miya1994}. In particular, in the second term of \eqref{eq:BAR24}, $$(X_-^{(e)}, X_+^{(e)}) \equiv ((Z_-^{(e)}, R_{-,e}^{(e)}, R_{-,s}^{(e)}), (Z_+^{(e)}, R_{+,e}^{(e)}, R_{+,s}^{(e)}))$$ represents the pre-jump and post-jump states conditioned on an external arrival at the system occurring at time $0$. Note that the choice of time $0$ is arbitrary since $X(0)$ follows the stationary distribution. We denote the Palm measure and its associated expectation by \(\mathbb{P}_{e}\) and \(\mathbb{E}_{e}\), respectively. When an external arrival occurs, \(R_{-,e}^{(e)}\) decreases to \(0\) and \(R_{+,e}^{(e)}\) is reset to a new independent interarrival time. Following the routing policy, the arriving job is routed to the station \(u\equiv a(Z_-^{(e)}, U)\), where $U$ is a random variable uniformly distributed on $[0,1]$. Hence, the queue length of station $u$ increases by one, i.e., \(Z_{+,u}^{(e)} = Z_{-,u}^{(e)} + 1\), while the states of all other stations remain unchanged.

The third term of \eqref{eq:BAR24} is defined similarly, where $$ (X_-^{(s,j)}, X_+^{(s,j)}) \equiv ((Z_-^{(s,j)}, R_{-,e}^{(s,j)}, R_{-,s}^{(s,j)}),  (Z_+^{(s,j)},R_{+,e}^{(s,j)}, R_{+,s}^{(s,j)}))$$ 
represents the pre-jump and post-jump states conditioned on a service completion at station $j$ occurring at time $0$. The Palm measure and its associated expectation are denoted by \(\mathbb{P}_{s,j}\) and \(\mathbb{E}_{s,j}\), respectively. When a service completion occurs at station \(j\), \(R_{-,s,j}^{(s,j)}\) decreases to \(0\), \(R_{+,s,j}^{(s,j)}\) is reset to an independent service time, and the queue length at station \(j\) decreases by one, i.e., \(Z_{+,j}^{(s,j)} = Z_{-,j}^{(s,j)} - 1\), and all other states remain unchanged.

The precise definitions of the Palm measures are provided in (6.8) and (6.9) of \citet{BravDaiMiya2023}, and the properties of the pre-jump and post-jump states are detailed in the following lemma.

\begin{lemma}[Lemma~6.3 of \citet{BravDaiMiya2023}]\label{lmm: independent}
    The pre-jump and the post-jump states have the following representation,
    $$
    \begin{aligned}
    & X_{+}^{(e)}=X_{-}^{(e)}+\sum_{j\in\J}\left( \e{j},    T_{e} /\alpha, 0 \right)\I{a(Z_-^{(e)}, U)=j},  \text { under } \mathbb{P}_{e}, \\
    & X_{+}^{(s,j)}=X_{-}^{(s,j)}+\left( - \e{j}  , 0,   \e{j}T_{s, j} /\mu_j \right), \quad \text { under } \mathbb{P}_{s, j},~j \in \J,
    \end{aligned}
    $$
    where $T_{e},T_{s, j}$ for $j \in \J$ are random variables defined on the measurable space $(\mathbb{S}^2, \mathscr{B}(\mathbb{S}^2))$, such that, under Palm distribution $\mathbb{P}_{e}$, $T_{e}$ is independent of $X_{-}^{(e)}$ and follows the same distribution as $T_{e}(1)$ on $(\Omega, \mathscr{F}, \mathbb{P})$, and, under Palm distribution $\mathbb{P}_{s, j}$, $T_{s, j}$ is independent of $X_{-}^{(s,j)}$ and has the same distribution as that of $T_{s, j}(1)$ on $(\Omega, \mathscr{F}, \mathbb{P})$.
\end{lemma}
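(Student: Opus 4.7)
The plan is to combine the Palm-measure framework for marked point processes with the elementary renewal structure of the $J+1$ independent sequences in \eqref{eq: unitized}. Each arrival epoch and each service-completion epoch at station $j$ are events of well-defined simple point processes on $(\Omega, \mathscr{F}, \mathbb{P})$, and $\mathbb{P}_e$ and $\mathbb{P}_{s,j}$ are their respective Palm distributions, i.e., the conditional distributions of the process given a tagged event of the corresponding type at time $0$.

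For the arrival case under $\mathbb{P}_e$, I would argue as follows. Just before an arrival, the residual clock $R_e$ hits zero from above, and the JSQ policy deterministically routes the arriving job to station $j=u(Z_-)$, giving $Z_+ = Z_- + \e{j}$ on the event $\{u(Z_-) = \e{j}\}$; summing over $j\in\J$ yields the first representation. Immediately after the arrival, the clock $R_e$ is reset to the next unitized interarrival time divided by $\alpha$, which produces the $T_e/\alpha$ increment in the second coordinate. No service completion occurs at an arrival instant almost surely, so the residual service times $R_s$ are unchanged. The independence of $T_e$ from $X_-$ together with $T_e \stackrel{d}{=} T_e(1)$ comes from the renewal property of $\{T_e(i)\}$: at the $n$-th arrival epoch, $X_-$ is measurable with respect to $\sigma(T_e(1), \ldots, T_e(n))$ and the service sequences, while the newly sampled interarrival time is $T_e(n+1)$, which is i.i.d.\ and independent of all of the above.

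The service-completion case under $\mathbb{P}_{s,j}$ is handled analogously: one job departs from station $j$ so $Z_+ = Z_- - \e{j}$, the clock $R_{s,j}$ is reset to $T_{s,j}/\mu_j$ with $T_{s,j} \stackrel{d}{=} T_{s,j}(1)$ independent of $X_-$ by the renewal property of $\{T_{s,j}(i)\}$, while $R_e$ and $R_{s,k}$ for $k\neq j$ are unchanged. The main obstacle is making the Palm-measure construction rigorous for this multi-clock piecewise-deterministic Markov process and verifying that events of distinct types almost surely do not coincide, so that each jump is unambiguously attributable to the reset of a single renewal clock. This is exactly what is handled by the general framework developed in Lemma 6.3 of \citet{BravDaiMiya2023}, to which I would appeal rather than redo the measure-theoretic construction from scratch.
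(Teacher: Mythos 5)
Your proposal is correct and takes essentially the same route as the paper, which likewise gives no self-contained measure-theoretic argument and simply derives the representation from Lemma~6.3 of \citet{BravDaiMiya2023}. Your added sample-path description of the jump dynamics and the renewal-based independence argument is consistent with that framework, so nothing further is needed.
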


To simplify the notation, we omit the superscripts “\(^{(e)}\)” or “\(^{(s,j)}\)” for \(j \in \mathcal{J}\). Then, the BAR introduced by \eqref{eq:BAR24} becomes: for any $f\in \D$,
\begin{equation*}
	\begin{aligned}
		\E_{\pi}\left[  \A f(X ) \right] &+   \alpha \E_{e} \Bigg[  \sum_{j\in \J} f(X_- + \Delta_{e,j})\I{a(Z_-,U)=j} -f(X_-) \Bigg] \\
		&+ \sum_{j\in \J} \lambda_j \E_{s,j} \left[ f(X_-+\Delta_{s,j})-f(X_-)  \right] = 0.
	\end{aligned}
\end{equation*}
Conditioning on the state $X_-=x_-\equiv (z_-,r_{-,e},r_{-,s})$, the randomness of the decision $u=a(z_-,U)$ depends on $U$. Thus, we can simplify the BAR by conditioning on $U$ as follows:
\begin{equation}\label{eq:BAR}
	\begin{aligned}
		-\E_{\pi}\left[  \A f(X ) \right] &=   \alpha \E_{e} \Bigg[  \sum_{j\in \J} f(X_- + \Delta_{e,j})p_{j|Z_-} -f(X_-)\Bigg] + \sum_{j\in \J} \lambda_j \E_{s,j} \left[ f(X_-+\Delta_{s,j})-f(X_-)  \right],
	\end{aligned}
\end{equation}
where the conditional probability is denoted by $p_{j|z_-} \equiv \Prob(a(Z_-,U)=j\mid Z_-=z_-)$.

\section{Proof of State Space Collapse in Proposition \ref{prop: ssc}} \label{sec: ssc}

In this section, we present the proof of state space collapse in Proposition \ref{prop: ssc} using the BAR approach. The proof is carried out by mathematical induction, following the method of \citet{GuanChenDai2024}. We apply the BAR in~\eqref{eq:BAR} with test functions inspired by \citet{EryiSrik2012} for the JSQ policy and \citet{MaguSrikYing2014} for the Po2 policy.

Without deviating much, we will present Proposition \ref{prop: ssc high} below to present a general SSC result with the higher moment conditions. It can be seen that Proposition \ref{prop: ssc} is a special case of Proposition~\ref{prop: ssc high}, which simply requires up to $(2+\delta)$th moments to carry out our weak convergence. 

\begin{proposition} \label{prop: ssc high}
    Given an integer $M\geq 1$ and a constant $\varepsilon\in [0,1)$, if the following moment condition holds for the unitized times:
    \begin{equation}\label{eq: moment condition high}
        \E\left[T_{e}^{ M+1+\varepsilon }{(1)}\right]<\infty, \text{ and 
 }\E\left[T_{s,j}^{ M+1+\varepsilon}{(1)}\right]<\infty\text{ for  }j \in \J.
    \end{equation}
    Then, there exists a positive constant $C<\infty$, independent of $r$, such that for all $r\in(0,r_0)$,
	\begin{equation*} 
		\E_\pi\left[ \Norm{Z_{\perp}^\uu}^{M + \varepsilon / (M+\varepsilon)} \right] \leq C.
	\end{equation*}
\end{proposition}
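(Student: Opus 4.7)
The plan is to apply the BAR~\eqref{eq:BAR} to a carefully chosen family of test functions so that the left-hand side of \eqref{eq:BAR} recovers a positive multiple of $\E_\pi[\norm{Z_\perp^\uu}^{M+\varepsilon/(M+\varepsilon)}]$, while the right-hand side is dominated by the negative drift that JSQ routing produces on $\norm{z_\perp}$, modulo lower-order moments already controlled by induction. I would proceed by induction on the integer $M$: the base case $M=1$ is essentially the first-moment BAR identity for the JSQ system, and the inductive step bounds the $M$-th moment of $\norm{Z_\perp^\uu}$ in terms of the $(M-1)$-st moment.

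For the integer case $\varepsilon=0$, I would take test functions of the form
$$f(x)=V(z)+V_e(z,r_e)+\sum_{j\in\J}V_{s,j}(z,r_{s,j}),$$
where $V(z)$ is a truncation of $\norm{z_\perp}^M$ belonging to $\D$, and the correction terms $V_e$ and $V_{s,j}$ are linear in the residual coordinates, chosen in the spirit of \citet{BravDaiMiya2023} and \citet{GuanChenDai2024} so that the Palm expectations on the right-hand side of \eqref{eq:BAR}, after integrating out $T_e$ and $T_{s,j}$, collapse into a clean difference-operator drift
$$\alpha^\uu\bigl(V(z+u(z))-V(z)\bigr)+\sum_{j\in\J}\mu_j\bigl(V(z-\e{j})-V(z)\bigr)\I{z_j>0},$$
plus remainders of order $\norm{z_\perp}^{M-1}$ weighted by a moment of the $T$'s of order at most $M+1+\varepsilon$, which is finite by~\eqref{eq: moment condition high}.

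The key Taylor-expansion computation is that, because JSQ sends every arrival to the station with the smallest queue,
$$\Linner{u(z)-e/J,\,\nabla\norm{z_\perp}^M}=M\norm{z_\perp}^{M-2}\bigl(\min_{j\in\J}z_j-\bar z\bigr)\leq 0,$$
which supplies a negative drift of exact order $\norm{z_\perp}^{M-1}$. Combined with the heavy-traffic parametrization $\alpha^\uu=\sum_{j}\mu_j-r$ and the flow balance $\sum_j\lambda_j^\uu=\alpha^\uu$, the positive contribution from service jumps is itself of order $r\norm{z_\perp}^{M-1}$ and hence absorbable uniformly in $r\in(0,\mu_{\min}/2)$. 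Rearranging the BAR then yields
$$c\,\E_\pi\bigl[\norm{Z_\perp^\uu}^{M}\bigr]\leq C\Bigl(1+\E_\pi\bigl[\norm{Z_\perp^\uu}^{M-1}\bigr]\Bigr),$$
which closes the induction.

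To lift the exponent from the integer $M$ to $M+\varepsilon/(M+\varepsilon)$, I would interpolate via Young's inequality between $\norm{Z_\perp^\uu}^{M}$, which is already controlled, and $\norm{Z_\perp^\uu}^{M+1}$, which appears only inside Taylor remainders and is therefore integrable against the $(M+1+\varepsilon)$-moment assumption in~\eqref{eq: moment condition high}. The main obstacle I expect is the explicit construction of $V_e$ and $V_{s,j}$: every cross-term arising in the Palm expansion must either match the desired drift, cancel, or be dominated by an $O(\norm{Z_\perp^\uu}^{M-1})$ quantity uniformly in $r$, and the unboundedness of $T_e$ and $T_{s,j}$ forces a delicate matching between the fractional exponent on the left and the $(M+1+\varepsilon)$-moment on the right. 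A secondary technical point is that $\norm{z_\perp}^M$ is not itself in $\D$, so one must truncate it before invoking \eqref{eq:BAR} and then pass to the monotone limit.
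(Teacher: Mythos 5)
Your overall skeleton---the BAR plus an induction, the JSQ drift identity $z_{\perp}^\T u(z)=z_{\min}-\bar z$, residual-time correction terms in the test function, and truncation to stay inside $\D$---is the same as the paper's, but two essential ingredients are missing, and without them the induction does not close under only the $(M+1+\varepsilon)$-moment condition. First, your inductive step asserts that every cross term in the Palm expansion is ``dominated by an $O(\Norm{Z_{\perp}^\uu}^{M-1})$ quantity uniformly in $r$.'' It is not: the Palm expansions produce terms such as $\Norm{Z_{-,\perp}^\uu}^{n-1} R_{-,e}^\uu$ and $\Norm{Z_{-,\perp}^\uu}^{n-1}\Norm{R_{-,s}^\uu}$, i.e.\ products of queue-length powers with \emph{unbounded} residual times evaluated under the Palm measures $\mathbb{P}_e,\mathbb{P}_{s,\ell}$, which are not the stationary measure. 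Controlling their expectations uniformly in $r$ is exactly what forces the paper to run a simultaneous four-statement induction: besides $\E_{\pi}[\norm{Z_\perp^\uu}^{n}]$ it carries the Palm moments $\E_{e}[\norm{Z_{-,\perp}^\uu}^{n}]+\sum_{\ell}\E_{s,\ell}[\norm{Z_{-,\perp}^\uu}^{n}]$ and the mixed moments $\E[\norm{Z_\perp^\uu}^{n}\psi_{M-n}]$ under both $\pi$ and the Palm measures, with the residual-time exponent $M-n$ \emph{decreasing} as $n$ grows so that H\"older closes using only $M+1$ moments of $T_e,T_{s,j}$. Your single-statement recursion $c\,\E_\pi[\norm{Z_\perp^\uu}^{M}]\le C(1+\E_\pi[\norm{Z_\perp^\uu}^{M-1}])$ has no mechanism for producing these auxiliary bounds. (There is also an order mismatch: with leading term $\norm{z_\perp}^{M}$ your own drift computation gives a negative term of order $\norm{z_\perp}^{M-1}$, so rearranging bounds the $(M-1)$-st moment, not the $M$-th; one needs a leading term of order $\norm{z_\perp}^{M+1}$, as in \eqref{eq: func_s1}.)

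Second, the lift from the integer exponent $M$ to $M+\varepsilon/(M+\varepsilon)$ cannot be done by interpolating ``between $\norm{Z_\perp^\uu}^{M}$ and $\norm{Z_\perp^\uu}^{M+1}$'': the $(M+1)$-st moment of $Z_\perp^\uu$ is not controlled anywhere (uniform control of it would require roughly one more moment of the primitives), and the claim that it is ``integrable against the $(M+1+\varepsilon)$-moment assumption'' conflates moments of $T_e,T_{s,j}$ with moments of the queue length. The paper instead reruns the drift argument at the fractional exponent $\beta=M+\varepsilon/(M+\varepsilon)$ itself, after establishing the fractional analogues of (S1)--(S4) in Lemma \ref{prop:general}; those are obtained by H\"older between the already-proved integer bound $\E_\pi[\norm{Z_\perp^\uu}^{M}]\le C_M$ and new residual-time estimates $\E[\psi_{M+\varepsilon}]<\infty$ from Lemma \ref{lmm: general}---an interpolation against residual-time moments, not against a higher queue-length moment, and this is precisely where the exponent $M+\varepsilon/(M+\varepsilon)$ comes from. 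As written, your fractional step is a genuine gap.
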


The proof is based on mathematical induction and the BAR in \eqref{eq:BAR} with appropriately designed test functions. A proof sketch is presented in Section \ref{sec: sketch ssc}, with the detailed proof provided in Sections~\ref{sec: base}-\ref{sec:proofS4}. We first provide the proof for the integer moment condition when $\varepsilon=0$, and we extend the results to the general case when $\varepsilon>0$ in Section \ref{sec: non int}.

\subsection{Sketch of Proof} \label{sec: sketch ssc}

We first prove Proposition \ref{prop: ssc high} with $\varepsilon=0$. Our proof includes moment bounds for $\norm{Z_{\perp}^\uu}$, along with auxiliary results that bound the expectations of certain cross terms involving $\norm{Z_{\perp}^\uu}$ and residual times.

\paragraph{Induction hypotheses.} For each integer $n=0,1,\ldots,M$, there exist positive and finite constants $C_{n}, D_{n}, E_{n}, F_{n}$, independent of $r$, such that the following statements hold for all $r\in (0,r_0)$:  
\begin{itemize}
	\item[(S1)] $\E_{\pi}[\norm{Z_\perp^\uu}^{n}] \leq C_n$,
	\item[(S2)] $\E_{e}[\norm{Z_{-,\perp}^\uu}^{n}]+\sum_{\ell\in \J}\E_{{s,\ell}}[\norm{Z_{-,\perp}^\uu}^{n}] \leq D_n$,
	\item[(S3)] $\E_{\pi}[\norm{Z_\perp^\uu}^{n}\psi_{M-n}(R_e^\uu, R_s^\uu)] \leq E_n$,
	\item[(S4)] $\E_{e}[\norm{Z_{-,\perp}^\uu}^{n}\psi_{M-n}(R_{-,e}^\uu, R_{-,s}^\uu)]+ \sum_{\ell\in \J}\E_{s,\ell}[\norm{Z_{-,\perp}^\uu}^{n}\psi_{M-n}(R_{-,e}^\uu, R_{-,s}^\uu)] \leq F_n$,
\end{itemize}
where $\psi_{n}(r_e, r_s) = r_e^n + \sum_{j\in \J} r_{s,j}^n$. The function $\psi_{M-n}$, appearing in the auxiliary statements (S3)-({S4}), depends on the order $M+1$ of the moment condition. This design of the auxiliary statements is crucial to our proof, as it helps reduce the order of the moment condition required to establish uniform bounds on $\E_{\pi}[\norm{Z_\perp^\uu}^M]$.

\paragraph{Mathematical induction.} We initially prove ({S1})-({S4}) for the base step for $n=0$. The proof details are provided in Section \ref{sec: base}. For the induction step, we verify ({S1})-({S4}) for each given $n$, under the induction hypotheses that they hold for $0,\ldots,n-1$. 

\paragraph{Proof of (S1)} Different from the test function  \(g_n(x) = \norm{z_{\perp}}^{n+1}/(n+1)\) applied in the drift method by \citet{EryiSrik2012} and \citet{MaguSrikYing2014}, we need the test function to account for the remaining time in our setting. In addition to the integer-valued queue length, we add variables to depict the remaining time for the job being served and the time until a new job arrives. With a slight abuse of notation, we refer to this concept as ``workload", though we define it in units of a job rather than the traditional measure of those by the service time.

We define the workload at station $j$ as
\[
w_j(x) = z_j + \mu_j r_{s,j} - \alpha^\uu r_e p_{j|z} = \begin{cases}
(z_j - 1) + \mu_j r_{s,j} + (1- \alpha^\uu r_e p_{j|z}), & \text{ if $z_j>0$},\\
(T_{s,j} - 1) + (1- \alpha^\uu r_e p_{j|z}), & \text{ if $z_j=0$},
\end{cases}
\]
where the last equality holds due to \eqref{eq: R_s}, since \(r_{s,j}\) represents a new service time when \(z_j = 0\). This is a similar definition of the workload used in (7.2) of \citet{HongScul2023} and Definition 5.1 of \citet{GrosHongHarcSche2024}.

For \(z_j > 0\), the first term represents the number of jobs waiting in the buffer of station~\(j\). The second term reflects the expected remaining proportion of a job being processed, and the third term accounts for the expected fraction of the next arriving job, but this term is adjusted by the routing probability \(p_{j|z}\). When \(z_j = 0\), the reasoning is similar, with the expected value of the first term zero due to \(\E[T_{s,j}] = 1\).

We replace the queue length $z$ in the test function $g_n$ with the workload $w$:
\begin{align*}
	\tilde g_n(x) &= \frac{1}{n+1} \norm{w_\perp(x)}^{n+1} \equiv \frac{1}{n+1} \norm{w(x) - \bar{w}(x)\cdot e}^{n+1},
\end{align*}
where $w(x) = (w_1(x), \ldots, w_J(x))$ and $\bar w(x) = \sum_{j\in \J} w_j(x)/J$.

Since the \((n+1)\)th power of the residual times introduces unnecessary terms and worsens the moment condition for \eqref{eq: moment condition high}, we expand \(\tilde{g}_n\) and retain only the first two leading order terms of \(z_\perp = z - \bar{z}\cdot e\):
\[
\tilde f_n (x)=\frac{1}{n+1}\norm{z_{\perp}}^{n+1} -\alpha^\uu r_e\norm{z_{\perp}}^{n-1}\sum_{j\in \J} z_{\perp,j}p_{j|z}  +  \norm{z_{\perp}}^{n-1}\sum_{j\in \J} z_{\perp,j}\mu_j  r_{s,j}.
\]
After calculating and simplifying $p_{j|z}$ under different routing policies, we use the following test function for the proof of (S1):
\begin{equation} \label{eq: func_s1}
	f_{n}(x)=\frac{1}{n+1}\norm{z_{\perp}}^{n+1} -\eta_{n,e}(z_{\perp}) \cdot  \alpha^\uu r_e   + \sum_{j\in \J} \eta_{n,s,j}(z_{\perp})  \cdot \mu_j  r_{s,j},
\end{equation}
where $\eta_{n,e}$ and $\eta_{n,s,j}$ for $j\in \J$ are given by
\begin{align*}
	\eta_{n,e}(z_{\perp}) &= \begin{cases}
		\norm{z_{\perp}}^{n-1} \cdot \min_{j\in\J}z_{\perp,j} & \text{ for JSQ policy},\\
		\norm{z_{\perp}}^{n-1} \cdot \left( \sum_{j\in \J}  {p_{j}}z_{\perp,j}  - \frac{p_{\min}^2}{\sqrt{J}}\Norm{z_{\perp}} \right) & \text{ for Po2 policy},
	\end{cases}\\
	\eta_{n,s,j}(z_{\perp}) &= \norm{z_{\perp}}^{n-1} \cdot z_{\perp,j} \quad \text{ for all } j\in \J.
\end{align*}

Applying the test function $f_n$ to the BAR \eqref{eq:BAR}, $\A f_n$ produces terms of order $\norm{z_\perp}^n$, while the Palm terms in BAR \eqref{eq:BAR} yield a polynomial in \(\norm{z_\perp}\) of order up to \(n-1\). Specifically, we have the following inequality:
\begin{align*}
	&b\E_{\pi}\left[ \Norm{Z_\perp^\uu}^n \right] \leq c_1 \E_{e}\left[ \Norm{Z^\uu_{-,\perp}}^{n-1} \right] +  c_2 \E_{e}\left[ \Norm{Z^\uu_{-,\perp}}^{n-1}\left( R_{-,e}^\uu+\sum_{j\in \J}R_{-,s.j}^\uu \right) \right]\\
	&\qquad +c_3\sum_{\ell\in \J}\E_{s,\ell}\left[ \Norm{Z^\uu_{-,\perp}}^{n-1} \right] +c_4\sum_{\ell\in \J}\E_{s,\ell}\left[ \Norm{Z^\uu_{-,\perp}}^{n-1} \left( R_{-,e}^\uu+\sum_{j\in \J}R_{-,s.j}^\uu \right)\right],
\end{align*}
where \(b, c_1, c_2, c_3, c_4\) are positive constants independent of \(r\). Hence, by the induction hypotheses (S2) and (S4) for $n-1$, the right-hand side is bounded by a constant \(B\) independent of \(r\). Therefore, the induction step $n$ for (S1) is proved. The detailed proof of (S1) is provided in Section \ref{sec: proof of S1}.

\paragraph{Proof of (S2) to (S4)} The proofs for Statements (S2)-(S4) follow a similar argument as in the proof of (S1), in which statements are bounded by applying specific test functions to the BAR \eqref{eq:BAR} and utilizing the induction hypotheses. Below we specify the test functions for (S2)-(S4) in \eqref{eq: func_s2}-\eqref{eq: func_s4}, respectively, whose complete proofs are given in Sections \ref{sec: proofS2}-\ref{sec:proofS4}.
\begin{align}
    f_{n,D}\left(x\right) &= \norm{z_{\perp}}^{n}\psi_1(r_e, r_s),\label{eq: func_s2}\\
    f_{n,E}\left(x\right) &= \norm{z_{\perp}}^{n}\psi_{M-n+1}(r_e, r_s),\label{eq: func_s3}\\
    f_{n,F}\left(x\right) &= \norm{z_{\perp}}^{n}\psi_{M-n}(r_e, r_s)\psi_1(r_e, r_s), \label{eq: func_s4}
\end{align}    

Note that the test functions provided above are not bounded, and therefore do not belong to the function space $\D$. To address this issue, we apply a standard truncation technique, as described in \citet{GuanChenDai2024}, which is omitted here for brevity.

\subsection{Proof of Base Step When $n=0$} \label{sec: base}

In this section, we aim to demonstrate that Statements (S1)-(S4) hold when $n = 0$. Clearly, (S1) and (S2) are trivially satisfied for $n = 0$ since $C_{0}=1$ and $D_{0}=J+1$. Throughout the rest of the paper, we use the shorthand notation $\psi$ for $\psi(R_e^\uu, R_s^\uu)$ under $\pi$ and for $\psi(R_{-,e}^\uu, R_{-,s}^\uu)$ under Palm measures, when the context is clear.

To prove Statement (S3) when $n = 0$, we employ the test function:
\begin{equation} \label{eq: func_s3_base}
	f_{0,E}\left(x\right) = \psi_{M+1}(r_e, r_s)=r_e^{M+1} + \sum_{j\in \J} r_{s,j}^{M+1}.
\end{equation}
Applying the test function $f_{0,E}$ to the BAR \eqref{eq:BAR}, the left-hand side of \eqref{eq:BAR} becomes
\begin{equation*}
	-\E_{\pi}\left[ \A f\left(X^\uu\right) \right]  =(M+1) \E_{\pi}\left[\left( \left( R_e^\uu \right)^{M} + \sum_{j\in\J} \left( R_{s,j}^\uu \right)^{M} \I{Z^\uu_j>0} \right) \right]. 
\end{equation*}
The right-hand side of the BAR \eqref{eq:BAR} corresponding to the external arrival is
\begin{align*}
	\E_{e} \left[ f_{0,E}\left(X_+^\uu\right)-f_{0,E}\left(X_-^\uu\right) \right]  &=\E_{e} \left[ \psi_{M+1}\left(R_{+,e}^\uu, R_{+,s}^\uu\right) - \psi_{M+1}\left(R_{-,e}^\uu, R_{-,s}^\uu\right) \right] \\
	& = \E_{e} \left[ \left(  \frac{T_{e}}{\alpha^\uu} \right)^{M+1} \right] = \frac{\E \left[   T_{e}^{M+1}  \right] }{(\alpha^\uu)^{M+1}},
\end{align*}
where the last equality follows from Lemma \ref{lmm: independent}.
The right-hand side of the BAR \eqref{eq:BAR} corresponding to the service completion at station $\ell \in \J$ is
\begin{align*}
	\E_{s,\ell} \left[ f_{0,E}\left(X_+^\uu\right)-f_{0,E}\left(X_-^\uu\right) \right] &=\E_{s,\ell} \left[ \psi_{M+1}\left(R_{+,e}^\uu, R_{+,s}^\uu\right) - \psi_{M+1}\left(R_{-,e}^\uu, R_{-,s}^\uu\right) \right] \\
	& = \E_{s,\ell} \left[ \left(  \frac{T_{s,\ell}}{\mu_\ell} \right)^{M+1} \right] = \frac{\E \left[   T_{s,\ell}^{M+1}  \right] }{\mu_\ell^{M+1}},
\end{align*}
where the last equality follows from Lemma \ref{lmm: independent}.

Combining the above results, the BAR \eqref{eq:BAR} with the test function $f_{0,E}$ yields
\begin{align*}
	&(M+1) \E_{\pi}\left[\left( \left( R_e^\uu \right)^{M} + \sum_{j\in\J} \left( R_{s,j}^\uu \right)^{M} \I{Z^\uu_j>0} \right) \right] \\
	&\quad\leq  \alpha^\uu \frac{\E \left[   T_{e}^{M+1}  \right] }{\left( \alpha^\uu \right)^{M+1}} + \sum_{\ell\in \J} \lambda_{\ell}^\uu \frac{\E \left[   T_{s,\ell}^{M+1}  \right] }{\mu_\ell^{M+1}}
	\leq  \frac{\E \left[   T_{e}^{M+1}  \right] }{\mu_{\max}^{M}} + \sum_{\ell\in \J} \frac{\E \left[   T_{s,\ell}^{M+1}  \right] }{\mu_\ell^{M}},
\end{align*}
where the final inequality holds due $\alpha^\uu \geq \mu_{\max}$ for all $r\in (0,r_0)$ and $\lambda_{\ell}^\uu \leq \mu_\ell$ by the definition of the departure rate.

Following the model setting in \eqref{eq: R_s}, the residual service time \(R^{(r)}_{s,j}(t)\) is set to the service time of the next job to be processed at station \(j\) if \(Z_j^\uu(t) = 0\). Thus, conditioning on the event \(\{Z_j^{(r)}=0\}\) under \(\pi\), \(R_{s, j}^{(r)} \stackrel{d}{=} {T_{s, j}}/{\mu_j^{(r)}}\) for any $j\in \mathcal{J}$, we then have 
\begin{align*}
	\E_{\pi}\left[   \left( R_{s,j}^\uu \right)^{M} \I{Z^\uu_j=0}   \right] =  \E_{\pi}\left[    \frac{\E[T^{M}_{s,j}]}{\mu^{M}_j}  \I{Z^\uu_j=0}  \right] \leq   \frac{\E[T^{M+1}_{s,j}]^{\frac{M}{M+1}}}{\mu^{M}_j},
\end{align*} 
and hence, we have
\begin{align*}
	\E_{\pi}[\psi_M]=\E_{\pi}\left[   \left( R_e^\uu \right)^{M } + \sum_{j\in\J} \left( R_{s,j}^\uu \right)^{M }  \right]  \leq  \frac{\E \left[   T_{e}^{M+1}  \right] }{\mu_{\max}^{M}} + \sum_{\ell\in \J} \frac{\E \left[   T_{s,\ell}^{M+1}  \right] }{\mu_\ell^{M}} + \sum_{j\in\J}  \frac{\E[T^{M+1}_{s,j}]^{\frac{M }{M+1}}}{\mu^{M }_j},
\end{align*}
which completes the proof of (S3) when $n=0$.

To prove Statement (S4) when $n = 0$, we employ the test function:
\begin{equation} \label{eq: func_s4_base}
	f_{0,F}\left(x\right) = \psi_{M}(r_e, r_s)\psi_1(r_e, r_s).
\end{equation}
Applying the test function $f_{0,F}$ to the BAR \eqref{eq:BAR}, the left-hand side of \eqref{eq:BAR} becomes
\begin{align*}
	-\E_{\pi}\left[ \A f\left(X^\uu\right) \right] & =\E_{\pi}\left[   \psi_{M } \cdot \left(1 + \sum_{j\in \J}  \I{Z_j^\uu >0} \right) \right]\notag \\
	&\quad + M \E_{\pi}\left[  \psi_1\cdot \left( \left( R_e^\uu \right)^{M -1} + \sum_{j\in\J} \left( R_{s,j}^\uu \right)^{M -1} \I{Z^\uu_j>0} \right) \right]\notag \\
	&  \leq (J+1)\E_{\pi}\left[   \psi_{M } \right]+M\E_{\pi}\left[   \psi_{M -1} \psi_1  \right] \leq (2J + 4J^2M)E_0, 
\end{align*}
where the final inequality holds due to the base step of Statement (S3) and the fact that
\begin{equation} \label{eq: 2psi}
	\psi_{M -1}(r_e, r_s)\psi_1(r_e, r_s) \leq (J+1)r_{\max}^{M-1} \cdot (J+1) r_{\max} \leq 4J^2 r_{\max}^{M} \leq 4J^2 \psi_{M}(r_e, r_s),
\end{equation}
where $r_{\max} = \max\{\{r_e\}\cup\{r_{s,j}\}_{j\in \J}\}$ and $J\geq 1$.

The right-hand side of the BAR \eqref{eq:BAR} corresponding to the external arrival is
\begin{align*}
	\E_{e} \left[ f_{0,F}\left(X_+^\uu\right)-f_{0,F}\left(X_-^\uu\right) \right] 
	& = \E_{e} \left[ \left( \psi_{M}  + \left( \frac{T_e}{\alpha^\uu} \right)^M \right)\left( \psi_1 + \frac{T_e}{\alpha^\uu} \right) - \psi_{M} \psi_1  \right] \\
	& \geq  \E_{e} \left[ \psi_{M}   \frac{T_e}{\alpha^\uu}   \right] =  \frac{1}{\alpha^\uu}   \E_{e} \left[ \psi_{M}   \right] ,
\end{align*}
where the inequality holds by neglecting the terms associated with $(T_e/\alpha^\uu)^M$, and the last equality follows from Lemma \ref{lmm: independent} and $\E  \left[ T_e \right] = 1$.

The right-hand side of the BAR \eqref{eq:BAR} corresponding to the service completion at station $\ell \in \J$ is
\begin{align*}
	\E_{s,\ell} \left[ f_{0,F}\left(X_+^\uu\right)-f_{0,F}\left(X_-^\uu\right) \right] 
	& = \E_{s,\ell} \left[ \left( \psi_{M}  + \left( \frac{T_{s,\ell}}{\mu_\ell} \right)^M \right)\left( \psi_1  + \frac{T_{s,\ell}}{\mu_\ell} \right) - \psi_{M} \psi_1  \right] \\
	& \geq  \E_{s,\ell} \left[ \psi_{M}   \frac{T_{s,\ell}}{\mu_\ell}   \right] =  \frac{1}{\mu_\ell}   \E_{s,\ell} \left[ \psi_{M}   \right] ,
\end{align*}
where the inequality holds by neglecting the terms associated with $(T_{s,\ell}/\mu_\ell)^M$, and the last equality follows from Lemma \ref{lmm: independent} and $\E  \left[ T_{s,\ell} \right] = 1$.

Combining the above results, the BAR \eqref{eq:BAR} with the test function $f_{0,F}$ yields
\begin{align*}
	(2J + 4J^2M)E_0 &\geq  \E_{e} \left[ \psi_{M}   \right] + \sum_{\ell\in \J} \frac{\lambda_\ell^\uu}{\mu_\ell}   \E_{s,\ell} \left[ \psi_{M}   \right] \geq \E_{e} \left[ \psi_{M}   \right] + \sum_{\ell\in \J} \frac{\mu_\ell-r}{\mu_\ell}   \E_{s,\ell} \left[ \psi_{M}   \right],
\end{align*}
where the last inequality follows from the property of the departure rate:
\begin{equation} \label{eq: lambda}
	\mu_\ell - r \leq \lambda_\ell^\uu \leq \mu_{\ell}.
\end{equation}
By definition, \(\lambda_\ell^\uu \leq \mu_\ell\) holds. Using the test function $f(x)=\sum_{j\in \J} z_j$, BAR \eqref{eq:BAR} indicates \(\lambda_\Sigma^\uu = \alpha^\uu\), and hence, we obtain \( r = \mu_\Sigma  - \alpha^\uu = \sum_{\ell \in \J} (\mu_\ell - \lambda_\ell^\uu) \). Since each component satisfies \(\mu_\ell - \lambda_\ell^\uu \geq 0\), we conclude \(\mu_\ell - \lambda_\ell^\uu \leq r\), completing the proof of \eqref{eq: lambda}. 

Therefore, we have
\begin{align*}
	\E_{e} \left[ \psi_{M}   \right] + \sum_{\ell\in \J}   \E_{s,\ell} \left[ \psi_{M}   \right] \leq \frac{(2J + 4J^2M)E_0}{\min_{\ell \in \J, r\in (0,r_0)}\frac{\mu_\ell - r}{ \mu_{\ell}}} \leq \frac{(2J + 4J^2M)E_0}{\frac{\mu_{\min}-r_0}{\mu_{\max}}},
\end{align*}
which completes the proof of (S4) when $n=0$.

\subsection{Proof of (S1)} \label{sec: proof of S1}
Assuming that Statements (S1)-(S4) hold for $0,1,\ldots,n-1$ according to the induction hypotheses, we proceed to prove Statement (S1) for the induction step $n$. 

The proof of (S1)  depends highly on the properties of the functions $\eta_{n,e}$ and $\eta_{n,s,j}$ in Lemma~\ref{lmm: prop_eta}, which is proved in Appendix \ref{sec: lemma}. To clarify the proof, we use $y^\uu = O(z^\uu)$ and $y^\uu = \Theta(z^\uu)$ to denote that $y^\uu/z^\uu$ is uniformly upper bounded or two-sided bounded by a constant over all $r\in (0,r_0)$, respectively. Note that this differs from the standard asymptotic notation. Specifically, there exists a constant $C>0$ such that 
\begin{equation*}
	y^\uu = O(z^\uu) \Leftrightarrow \sup_{r\in (0,r_0)} \frac{y^\uu}{z^\uu} \leq C, \quad y^\uu = \Theta(z^\uu) \Leftrightarrow \sup_{r\in (0,r_0)} \abs{\frac{y^\uu}{z^\uu}} \leq C.
\end{equation*}

We denote \(\delta_j\) as the jump increment of \(z_{\perp}\) due to an external arrival at station \(j\):
\begin{align}
	\delta_j &\equiv z_{+,\perp} - z_{-,\perp} = \left[ \left( z_- + \e{\ell} \right) - \left( \bar z_- + \frac{1}{J}  \right)\cdot e \right]  - \left( z_- - \bar z_- \cdot e \right) = \e{\ell} - e/J. \label{eq: delta_ell}
\end{align}
The jump increment of \(z_{\perp}\) due to a service completion at station \(j\) is then \(-\delta_j\). The following lemma demonstrates the properties of the functions \(\eta_{n,e}\) and \(\eta_{n,s,j}\), leading to the Palm terms in BAR \eqref{eq:BAR} having a polynomial in \(\norm{z_\perp}\) of order up to \(n-1\) and the proof is provided in the Appendix \ref{sec: lemma}.

\begin{lemma} \label{lmm: prop_eta}
	For each $n\geq 1$, the functions $\eta_{n,e}$ and $\eta_{n,s,j}$ satisfy the following properties:
	\begin{align}
		\label{eq: prop_e}
		&\sum_{j\in \J}\frac{1}{n+1}\norm{z_{\perp}+\delta_j}^{n+1}p_{j\mid z}  - \frac{1}{n+1}\norm{z_{\perp}}^{n+1} - \eta_{n,e}(z_{\perp}) = O(\norm{z_{\perp}}^{n-1}),\\
		\label{eq: prop_s}
		&\frac{1}{n+1}\norm{z_{\perp}-\delta_j}^{n+1} - \frac{1}{n+1}\norm{z_{\perp}}^{n+1} + \eta_{n,s,j}(z_{\perp}) = O(\norm{z_{\perp}}^{n-1}),\\
	&\eta_{n,e}(z_{\perp} + \delta) - \eta_{n,e}(z_{\perp}) = \Theta(\norm{z_{\perp}}^{n-1}), \text{ for all $\norm{\delta}\leq 1$} \label{eq: prop_e2}\\
	&\eta_{n,s,j}(z_{\perp} - \delta) - \eta_{n,s,j}(z_{\perp}) = \Theta(\norm{z_{\perp}}^{n-1}), \text{ for all $\norm{\delta}\leq 1$} \label{eq: prop_s2}\\
	\label{eq: coeffS1}
	&-\A f_n(x) = -\eta_{n,e}(z_{\perp}) \cdot \alpha^\uu  + \sum_{j\in \J} \eta_{n,s,j}(z_{\perp}) \cdot \mu_j \I{z_j>0} \geq b\norm{z_{\perp}}^{n},
\end{align}
with $b = \mu_{\min}/2\sqrt{J}$ for JSQ policy and $b = \mu_\Sigma p_{\min}^2/(2\sqrt{J})$ for Po2 policy.
\end{lemma}

Applying the test function $f_n$ in \eqref{eq: func_s1} to BAR \eqref{eq:BAR} and utilizing \eqref{eq: coeffS1}, the left-hand side of \eqref{eq:BAR} becomes
\begin{equation}
	-\E_{\pi}\left[\mathcal{A}f(X^\uu)\right]\geq b \E_{\pi}[  \norm{Z_{\perp}^\uu}^n ]  . \label{eq: LHS res}
\end{equation}

On the right-hand side of the BAR \eqref{eq:BAR} corresponding to the external arrival, the increment conditional on the pre-jump state $x_-=(z_-,r_{-,e}, r_{-,s})$ is
\begin{align*}
	&\E\left[ f_n(x_+)-f_n(x_-) \right] = \sum_{\ell \in \J}  \E\left[ f_n(x_- + \Delta_{e,\ell}) \right]p_{\ell\mid z} - f(x_-) \notag \\
	& = \sum_{\ell \in \J} \Big[ \frac{1}{n+1}\Norm{z_{-,\perp} + \delta_\ell}^{n+1} - \eta_{n,e}(z_{-,\perp} + \delta_\ell) \cdot \E[T_e] + \sum_{j\in \J} \eta_{n,s,j}(z_{-,\perp} + \delta_\ell) \cdot \mu_j r_{s,j}\Big]p_{\ell\mid z} - f(x_-) \notag \\
	& = \sum_{\ell \in \J}  \frac{1}{n+1}\Norm{z_{-,\perp} + \delta_\ell}^{n+1}p_{\ell\mid z}  - \frac{1}{n+1}\Norm{z_{-,\perp}}^{n+1} - \eta_{n,e}(z_{-,\perp})  \notag \\
	&\quad + \sum_{\ell \in \J}  \left[ \eta_{n,e}(z_{-,\perp}) -\eta_{n,e}(z_{-,\perp} + \delta_\ell)\right]p_{\ell\mid z} + \sum_{\ell \in \J} \sum_{j\in \J} \left[ \eta_{n,s,j}(z_{-,\perp} + \delta_\ell)  - \eta_{n,s,j}(z_{-,\perp})\right] \mu_j r_{s,j}p_{\ell\mid z} \notag \\
	& = O(\norm{z_{-,\perp}}^{n-1}) + \sum_{j\in \J} O(\norm{z_{-,\perp}}^{n-1})r_{-,s,j}= O(\norm{z_{-,\perp}}^{n-1}) + O(\norm{z_{-,\perp}}^{n-1})\psi_1, 
\end{align*} 
where the second equality holds since $T_e$ and $X^\uu_-$ are independent in Lemma \ref{lmm: independent}, the third equality holds due to $\E[T_e]=1$, and the last equality follows from \eqref{eq: prop_e} and \eqref{eq: prop_e2} in Lemma~\ref{lmm: prop_eta}.
Taking the expectation under $\mathbb{P}_e$, the right-hand side of \eqref{eq:BAR} corresponding to the external arrival becomes
\begin{align}
	&\E_{e}\left[ f_n(X_+^\uu)-f_n(X_-^\uu)  \right]=\E_{e}\left[ \E\left[ f_n(x_+)-f_n(x_-) \mid X_-^\uu=x_-\right] \right] \notag \\
	&\quad = \E_{e}\left[ O(\norm{Z^\uu_{-,\perp}}^{n-1}) \right] +   \E_{e}\left[ O(\norm{Z^\uu_{-,\perp}}^{n-1})\psi_1 \right] \leq \Theta_1. \label{eq: RHS1 res}
\end{align}
Since Hölder's inequality indicates that $\E_{e}[ O(\norm{Z^\uu_{-,\perp}}^{n-1})\psi_1 ]\leq \E_{e}[ O(\norm{Z^\uu_{-,\perp}}^{n-1}) ]^{(M-n)/(M-n+1)}\cdot$ $\E_{e}[ O(\norm{Z^\uu_{-,\perp}}^{n-1})\psi_1^{M-n+1} ]^{1/(M-n+1)}$, the inequality in \eqref{eq: RHS1 res} follows from the induction hypotheses (S2) and (S4) for $n-1$, and \(\Theta_1\) is a positive constant independent of \(r\).

On the right-hand side of the BAR \eqref{eq:BAR} corresponding to the service completion at station $\ell \in \J$, the increment conditional on the pre-jump state $x_-=(z_-,r_{-,e}, r_{-,s})$ is
\begin{align*}
	&\E\left[ f_n(x_+)-f_n(x_-) \right] = \E\left[ f_n(x_- + \Delta_{s,\ell}) \right] - f(x_-) \\
	& =  \frac{1}{n+1}\Norm{z_{-,\perp} - \delta_\ell}^{n+1}
	 - \eta_{n,e}(z_{-,\perp} - \delta_\ell) \cdot \alpha^\uu r_{-,e} + \sum_{j\in \J, j\neq \ell} \eta_{n,s,j}(z_{-,\perp} - \delta_\ell) \cdot \mu_j r_{-,s,j} \\
	&   + \eta_{n,s,\ell}(z_{-,\perp} - \delta_\ell) \cdot \E[T_{s,\ell}]  - \Big[ \frac{1}{n+1}\Norm{z_{-,\perp}}^{n+1} - \eta_{n, e}(z_{-,\perp}) \cdot \alpha^\uu r_{-,e} + \sum_{j\in \J, j\neq \ell} \eta_{n,s,j}(z_{-,\perp}) \cdot \mu_j r_{-,s,j} \Big] \\
	& = \frac{1}{n+1}\Norm{z_{-,\perp} - \delta_\ell}^{n+1} - \frac{1}{n+1}\Norm{z_{-,\perp}}^{n+1} + \eta_{n,s,\ell}(z_{-,\perp}) + \eta_{n,s,\ell}(z_{-,\perp} - \delta_\ell) - \eta_{n,s,\ell}(z_{-,\perp})  \\
	&\quad+ \left( \eta_{n,e}(z_{-,\perp}) - \eta_{n,e}(z_{-,\perp} - \delta_\ell) \right) \cdot \alpha^\uu r_{-,e}  + \sum_{j\in \J, j\neq \ell} \left(  \eta_{n,s,j}(z_{-,\perp} - \delta_\ell) - \eta_{n,s,j}(z_{-,\perp}) \right) \cdot \mu_j r_{-,s,j} \\
	&= O(\norm{z_{-,\perp}}^{n-1}) + O(\norm{z_{-,\perp}}^{n-1})r_{-,e} + \sum_{j\in \J, j\neq \ell} O(\norm{z_{-,\perp}}^{n-1})r_{-,s,j}= O(\norm{z_{-,\perp}}^{n-1}) + O(\norm{z_{-,\perp}}^{n-1})\psi_1, 
\end{align*}
where  the second equality follows from the fact that $T_{s,\ell}$ and $X^\uu_-$ are independent in Lemma \ref{lmm: independent}, the third equality holds due to $\E[T_{s,\ell}]=1$, and the last equality follows from \eqref{eq: prop_s} and \eqref{eq: prop_s2} in Lemma~\ref{lmm: prop_eta}. Taking the expectation under $\mathbb{P}_{s,\ell}$, the right-hand side of \eqref{eq:BAR} that corresponds to the service completion at station $\ell$ becomes
\begin{align}
	&\E_{s,\ell}\left[ f_n(X_+^\uu)-f_n(X_-^\uu)  \right]=\E_{s,\ell}\left[ \E\left[ f_n(x_+)-f_n(x_-) \mid X_-^\uu=x_-\right] \right] \notag \\
	&\quad = \E_{s,\ell}\left[ O(\norm{Z^\uu_{-,\perp}}^{n-1}) \right] +   \E_{s,\ell}\left[ O(\norm{Z^\uu_{-,\perp}}^{n-1})\psi_1 \right] \leq \Theta_2, \label{eq: RHS2 res}
\end{align}
where the inequality follows from the induction hypotheses (S2) and (S4) for $n-1$, and \(\Theta_2\) is a positive constant independent of \(r\).

In summary, combining \eqref{eq: LHS res}, \eqref{eq: RHS1 res} and \eqref{eq: RHS2 res}, the BAR \eqref{eq:BAR} implies that
\begin{equation*}
	b \E_{\pi}\left[  \Norm{Z_{\perp}^\uu}^n \right] \leq \alpha^\uu \Theta_1 + \sum_{\ell \in \J} \lambda^\uu \Theta_2 = \left( \Theta_1 +   \Theta_2 \right) \alpha^\uu \leq  \left( \Theta_1 +   \Theta_2 \right) \mu_\Sigma, 
\end{equation*}
which completes the proof of (S1) for the induction step $n$.

\subsection{Proof of (S2)} \label{sec: proofS2}
Based on the induction hypotheses, we assume that Statement (S1) holds for $0,1\ldots,n$, and Statements (S2)-(S4) are satisfied for $0,1,\ldots, n-1$. We will now demonstrate the validity of Statement (S2) for the induction step $n$ by substituting $f_{n,D}$ as defined in \eqref{eq: func_s2} into the BAR \eqref{eq:BAR}.

On the left-hand side of the BAR \eqref{eq:BAR}, we have
\begin{equation*}
	-\A f_{n,D}(x) = \norm{z_{\perp}}^{n}\left( 1 + \sum_{j\in \J} \I{z_j>0} \right) \leq (J+1)\norm{z_{\perp}}^{n},
\end{equation*}
and hence, the left-hand side of the BAR \eqref{eq:BAR} becomes
\begin{equation}
	-\E_{\pi}\left[ \A f_{n,D}\left(X^\uu\right) \right]   \leq (J+1)\E_{\pi}\left[ \Norm{Z_{\perp}^\uu}^{n}  \right] \leq (J+1)C_n, \label{eq: LHS res2}
\end{equation}
where the last inequality follows from the induction hypothesis (S1) for $n$.

On the right-hand side of the BAR \eqref{eq:BAR} corresponding to the external arrival, conditioning on the pre-jump state $x_-=(z_-,r_{-,e}, r_{-,s})$, we have
\begin{align*}
	\E[f_{n,D}(x_+)-f_{n,D}(x_-)] &= \sum_{\ell\in \J} \E\left[ f_{n,D}(x_- + \Delta_{e,\ell}) \right]p_{\ell\mid z} - f_{n,D}(x_-) \notag \\
	&  = \left( \psi_1 + \frac{\E[T_e]}{\alpha^\uu} \right) \sum_{\ell\in \J} \Norm{z_{-,\perp} + \delta_\ell}^{n}  p_{\ell\mid z} - \Norm{z_{-,\perp}}^{n} \psi_1 \notag \\
	&  =  \left( \psi_1 + \frac{1}{\alpha^\uu} \right)\sum_{\ell\in \J} \left(\Norm{z_{-,\perp} + \delta_\ell}^{n} - \Norm{z_{-,\perp}}^{n} \right) p_{\ell\mid z} + \Norm{z_{-,\perp}}^{n}\frac{1}{\alpha^\uu} \notag \\
	& = \left( \psi_1 + \frac{1}{\alpha^\uu} \right) \Theta(\norm{z_{-,\perp}}^{n-1}) + \Norm{z_{-,\perp}}^{n}\frac{1}{\alpha^\uu}, 
\end{align*}
where the second equality follows from the fact that $T_e$ and $X^\uu_-$ are independent in Lemma \ref{lmm: independent}, the third equality holds by $\E[T_e]=1$, and the last equality follows from \eqref{eq: prop_e2}. Taking the expectation under $\mathbb{P}_e$, the right-hand side of \eqref{eq:BAR} that corresponds to the external arrival becomes
\begin{align}
	&\E_{e}\left[ f_{n,D}(X^\uu_+)-f_{n,D}(X^\uu_-) \right] = \E_e\left[ \E\left[ f_{n,D}(x_+)-f_{n,D}(x_-) \mid X_-^\uu =x_-\right] \right] \notag \\
	&\quad = \E_e\left[ \left( \psi_1 + \frac{1}{\alpha^\uu} \right) \Theta(\norm{Z_{-,\perp}^\uu}^{n-1})\right] +\frac{1}{\alpha^\uu} \E_e\left[ \Norm{Z_{-,\perp}^\uu}^{n} \right] \geq -\Theta_3 + \frac{1}{\alpha^\uu} \E_e\left[ \Norm{Z_{-,\perp}^\uu}^{n} \right], \label{eq: RHS1 res2}
\end{align}
where the last inequality holds by the induction hypotheses (S2) and (S4) for $n-1$, and \(\Theta_3\) is a positive constant independent of \(r\).

On the right-hand side of the BAR \eqref{eq:BAR} corresponding to the service completion at station $\ell$, conditional on the pre-jump state $x_-=(z_-,r_{-,e}, r_{-,s})$, the increment of $f_{n,D}$ is
\begin{align*}
	&\E[f_{n,D}(x_+)-f_{n,D}(x_-)] =  \E[f_{n,D}(x_- + \Delta_{s,\ell})-f_{n,D}(x_-)] = \Norm{z_{-,\perp} - \delta_\ell}^{n}\left( \psi_1 + \frac{\E[T_{s,\ell}]}{\mu_\ell} \right) -  \Norm{z_{-,\perp}}^{n}  \psi_1  \\
	& \quad =    \left( \Norm{z_{-,\perp} - \delta_\ell}^{n} -  \Norm{z_{-,\perp}}^{n} \right) \left( \psi_1 + \frac{1}{\mu_\ell} \right) +  \Norm{z_{-,\perp}}^{n}  \frac{1}{\mu_\ell}  = \Theta(\norm{z_{-,\perp}}^{n-1})\left( \psi_1 + \frac{1}{\mu_\ell} \right) +  \Norm{z_{-,\perp}}^{n}  \frac{1}{\mu_\ell},
\end{align*}
where the second equality follows from the fact that $T_{s,\ell}$ and $X^\uu_-$ are independent in Lemma \ref{lmm: independent}, the third equality holds by $\E[T_{s,\ell}]=1$, and the last equality follows from \eqref{eq: prop_s2}. Taking the expectation under $\mathbb{P}_{s,\ell}$, the right-hand side of \eqref{eq:BAR} that corresponds to the service completion at station $\ell$ becomes
\begin{align}
	&\E_{s,\ell}\left[ f_{n,D}(X^\uu_+)-f_{n,D}(X^\uu_-) \right] = \E_{s,\ell}\left[ \E\left[ f_{n,D}(x_+)-f_{n,D}(x_-) \mid X_-^\uu =x_-\right] \right] \notag \\
	&\quad = \E_{s,\ell}\left[ O(\norm{Z_{-,\perp}^\uu}^{n-1})\left( \psi_1 + \frac{1}{\mu_\ell} \right) \right] +  \E_{s,\ell}\left[ \Norm{Z_{-,\perp}^\uu}^{n}  \frac{1}{\mu_\ell} \right] \geq -\Theta_4 + \frac{1}{\mu_\ell}  \E_{s,\ell}\left[ \Norm{Z_{-,\perp}^\uu}^{n} \right], \label{eq: RHS2 res2}
\end{align}
where the last inequality holds by the induction hypotheses (S2) and (S4) for $n-1$, and \(\Theta_4\) is a positive constant independent of \(r\).

In summary, combining \eqref{eq: LHS res2}, \eqref{eq: RHS1 res2} and \eqref{eq: RHS2 res2}, the BAR \eqref{eq:BAR} implies that
\begin{align*}
	(J+1)C_n \geq -\alpha^\uu \Theta_3 + \E_{e}\left[ \Norm{Z_{-,\perp}^\uu}^{n}   \right] + \sum_{\ell\in\J} \lambda^\uu_\ell \left( -\Theta_4 + \frac{1}{ \mu_{\ell}}\E_{s,\ell}\left[\Norm{Z_{-,\perp}^\uu}^{n}  \right] \right)\\
	\geq -\alpha^\uu \left( \Theta_3 + \Theta_4 \right) + \E_{e}\left[ \Norm{Z_{-,\perp}^\uu}^{n}   \right] + \sum_{\ell\in\J}   \frac{\mu_\ell - r}{ \mu_{\ell}}\E_{s,\ell}\left[\Norm{Z_{-,\perp}^\uu}^{n}  \right],
\end{align*}
where the last inequality follows from $\lambda^\uu_\Sigma = \alpha^\uu$ and \eqref{eq: lambda}.

Therefore, we have
\begin{align*}
	\E_{e}\left[ \Norm{Z_{-,\perp}^\uu}^{n}   \right] + \sum_{\ell\in\J}  \E_{s,\ell}\left[\Norm{Z_{-,\perp}^\uu}^{n}  \right] &\leq \frac{(J+1)C_n +\alpha^\uu \left( \Theta_3 + \Theta_4 \right) }{\min_{\ell \in \J, r\in (0,r_0)}\frac{\mu_\ell - r}{ \mu_{\ell}}}  \leq \frac{(J+1)C_n + \left( \Theta_3 + \Theta_4 \right) \mu_{\Sigma} }{\frac{\mu_{\min}-r_0}{\mu_{\max}}},
\end{align*}
which completes the proof of (S2) for the induction step $n$. 

\subsection{Proof of (S3)}
Building upon the induction hypotheses, we assume that Statements (S1) and (S2) hold for $0,1,\ldots, n$, while Statements (S3) and (S4) are satisfied for $0,1,\ldots, n-1$. In this section, we aim to prove Statement (S3) for the induction step $n$. Note that when $n = M$, Statement (S3) holds trivially due to Statement (S1). Hence, in the subsequent analysis, we will solely focus on the cases when $1 \leq n < M$.

Plugging $f_{n,E}$ in \eqref{eq: func_s3} into BAR \eqref{eq:BAR}, the left-hand side becomes
\begin{equation}
	-\E_{\pi}\left[ \A f\left(X^\uu\right) \right]  =(M-n+1) \E_{\pi}\left[ \Norm{Z_{\perp}^\uu}^{n}\left( \left( R_e^\uu \right)^{M-n} + \sum_{j\in\J} \left( R_{s,j}^\uu \right)^{M-n} \I{Z^\uu_j>0} \right) \right]. \label{eq: LHS res3}
\end{equation}

On the right-hand side of the BAR \eqref{eq:BAR} corresponding to the external arrival, conditional on the pre-jump state $x_-=(z_-,r_{-,e}, r_{-,s})$, we have
\begin{align*}
	&\E\left[ f_{n,E}(x_+)-f_{n,E}(x_-) \right] = \sum_{\ell\in \J} \E\left[ f_{n,E}(x_- + \Delta_{e,\ell}) \right]p_{\ell\mid z} - f_{n,E}(x_-) \notag \\
	& \quad = \left( \psi_{M-n+1} + \E\left[\left( \frac{T_e}{\alpha^\uu} \right)^{M-n+1}\right] \right) \sum_{\ell\in \J}  \Norm{z_{-,\perp} + \delta_\ell}^{n}p_{\ell\mid z} - \Norm{z_{-,\perp}}^{n} \psi_{M-n+1}  \notag \\
	& \quad = \left( \psi_{M-n+1} + \frac{\E[T_e^{M-n+1}]}{(\alpha^\uu)^{M-n+1}} \right) \sum_{\ell\in \J} \left( \Norm{z_{-,\perp} + \delta_\ell}^{n} - \Norm{z_{-,\perp}}^{n} \right) p_{\ell\mid z} + \Norm{z_{-,\perp}}^{n} \frac{\E[T_e^{M-n+1}]}{(\alpha^\uu)^{M-n+1}} \notag \\
	& \quad= \left( \psi_{M-n+1} + \frac{\E[T_e^{M-n+1}]}{(\alpha^\uu)^{M-n+1}} \right) O(\norm{z_{-,\perp}}^{n-1}) + \Norm{z_{-,\perp}}^{n} \frac{\E[T_e^{M-n+1}]}{(\alpha^\uu)^{M-n+1}},
\end{align*}
where the second equality follows from the fact that $T_e$ and $X^\uu_-$ are independent in Lemma \ref{lmm: independent}, and the last equality follows from \eqref{eq: prop_e2}. Taking the expectation under $\mathbb{P}_e$, the right-hand side of \eqref{eq:BAR} that corresponds to the external arrival becomes
\begin{align}
	&\E_{e}\left[ f_{n,E}(X^\uu_+)-f_{n,E}(X^\uu_-) \right] = \E_{e}\left[ \E\left[ f_{n,E}(x_+)-f_{n,E}(x_-) \mid X_-^\uu =x_-\right] \right] \notag \\
	&\quad = \E_{e}\left[ \left( \psi_{M-n+1} + \frac{\E[T_e^{M-n+1}]}{(\alpha^\uu)^{M-n+1}} \right) \Theta(\norm{Z_{-,\perp}^\uu}^{n-1}) \right] + \frac{\E[T_e^{M-n+1}]}{(\alpha^\uu)^{M-n+1}}\E_{e}\left[ \Norm{Z_{-,\perp}^\uu}^{n}  \right] \leq \Theta_5  \label{eq: RHS1 res3}
\end{align}
where the last inequality holds by the induction hypotheses (S2) for $n-1$ and $n$ together with (S4) for $n-1$, and \(\Theta_5\) is a positive constant independent of \(r\).

On the right-hand side of the BAR \eqref{eq:BAR} corresponding to the service completion at station $\ell$, conditional on the pre-jump state $x_-=(z_-,r_{-,e}, r_{-,s})$, the increment of $f_{n,E}$ is
\begin{align*}
	&\E\left[ f_{n,E}(x_+)-f_{n,E}(x_-) \right] = \E\left[ f_{n,E}(x_- + \Delta_{s,\ell}) \right] - f_{n,E}(x_-) \\
	& = \left( \psi_{M-n+1} + \frac{\E[T_{s,\ell}^{M-n+1}]}{\mu_\ell^{M-n+1}} \right) \Norm{z_{-,\perp} - \delta_\ell}^{n} - \Norm{z_{-,\perp}}^{n} \psi_{M-n+1} \\
	& = \left( \psi_{M-n+1} + \frac{\E[T_{s,\ell}^{M-n+1}]}{\mu_\ell^{M-n+1}} \right) \left( \Norm{z_{-,\perp} - \delta_\ell}^{n} - \Norm{z_{-,\perp}}^{n} \right) + \Norm{z_{-,\perp}}^{n} \frac{\E[T_{s,\ell}^{M-n+1}]}{\mu_\ell^{M-n+1}} \\
	& = \left( \psi_{M-n+1} + \frac{\E[T_{s,\ell}^{M-n+1}]}{\mu_\ell^{M-n+1}} \right) \Theta(\norm{z_{-,\perp}}^{n-1}) + \Norm{z_{-,\perp}}^{n} \frac{\E[T_{s,\ell}^{M-n+1}]}{\mu_\ell^{M-n+1}},
\end{align*}
where the second equality follows from the fact that $T_{s,\ell}$ and $X^\uu_-$ are independent in Lemma \ref{lmm: independent}, and the last equality follows from \eqref{eq: prop_s2}. Taking the expectation under $\mathbb{P}_{s,\ell}$, the right-hand side of \eqref{eq:BAR} that corresponds to the service completion at station $\ell$ becomes
\begin{align}
	&\E_{s,\ell}\left[ f_{n,E}(X^\uu_+)-f_{n,E}(X^\uu_-) \right] = \E_{s,\ell}\left[ \E\left[ f_{n,E}(x_+)-f_{n,E}(x_-) \mid X_-^\uu =x_-\right] \right] \notag \\
	&\quad = \E_{s,\ell}\left[ \left( \psi_{M-n+1} + \frac{\E[T_{s,\ell}^{M-n+1}]}{\mu_\ell^{M-n+1}} \right) \Theta(\norm{Z_{-,\perp}^\uu}^{n-1}) \right] + \frac{\E[T_{s,\ell}^{M-n+1}]}{\mu_\ell^{M-n+1}}\E_{s,\ell}\left[ \Norm{Z_{-,\perp}^\uu}^{n}  \right] \leq \Theta_6, \label{eq: RHS2 res3}
\end{align}
where the last inequality holds by the induction hypotheses (S2) for $n-1$ and $n$ together with (S4) for $n-1$, and \(\Theta_6\) is a positive constant independent of \(r\).

In summary, combining \eqref{eq: LHS res3}, \eqref{eq: RHS1 res3} and \eqref{eq: RHS2 res3}, the BAR \eqref{eq:BAR} implies that
\begin{align*}
	\E_{\pi}\left[ \Norm{Z_{\perp}^\uu}^{n}\left( \left( R_e^\uu \right)^{M-n} + \sum_{j\in\J} \left( R_{s,j}^\uu \right)^{M-n} \I{Z^\uu_j>0} \right) \right]&\leq \frac{\alpha^\uu \Theta_5 + \sum_{\ell\in\J} \lambda^\uu_\ell \Theta_6}{M-n+1}  \leq \frac{ \left( \Theta_5 +   \Theta_6 \right)\mu_{\Sigma}}{M-n+1} .
\end{align*}

Following the model setting in \eqref{eq: R_s}, the residual service time \(R^{(r)}_{s,j}(t)\) is set to the service time of the next job to be processed at station \(j\) if \(Z_j^\uu(t) = 0\). Thus, \(R^{(r)}_{s,j}(t)\) is independent of the $\sigma$-algebra $\mathcal{F}_{t-}^{X^\uu}$, and hence is independent of $Z^\uu(t)$. Conditional on the event \(\{Z_j^{(r)}=0\}\) under \(\pi\), \(R_{s, j}^{(r)}\) has the same distribution as \({T_{s, j}}/{\mu_j}\) and is independent of $Z^\uu$.  Therefore, we have
\begin{align*}
	\E_{\pi}\left[ \Norm{Z_{\perp}^\uu}^{n}  \left( R_{s,j}^\uu \right)^{M-n} \I{Z^\uu_j=0}   \right] =  \E_{\pi}\left[ \Norm{Z_{\perp}^\uu}^{n}  \frac{\E[T^{M-n}_{s,j}]}{\mu^{M-n}_j}  \I{Z^\uu_j=0}  \right] \leq C_{n} \frac{\E[T^{M+1}_{s,j}]^{\frac{M-n}{M+1}}}{\mu^{M-n}_j},
\end{align*}
where the last inequality holds by the induction hypothesis (S1) for $n$, and hence,
\begin{align*}
	\E_{\pi}\left[ \Norm{Z_{\perp}^\uu}^{n}\psi_{M-n}\right]&\leq \frac{ \left( \Theta_5 +   \Theta_6 \right)\sum_{j\in \J}\mu_{j}}{M-n+1}  + \sum_{j\in\J}C_{n} \frac{\E[T^{M+1}_{s,j}]^{\frac{M-n}{M+1}}}{\mu^{M-n}_j},
\end{align*}
which completes the proof of (S3) for the induction step $n$.

\subsection{Proof of (S4)} \label{sec:proofS4}
Given the induction hypotheses, we assume that Statements (S1)-(S3) hold for $0,1,\ldots,n$, and Statement (S4) is satisfied for $0,1,\ldots,n-1$. We will now prove Statement (S4) for the induction step $n$. However, if $n = M$, Statement (S4) is trivially satisfied due to Statement (S2). Therefore, in the following analysis, we will focus on the cases where $1 \leq n < M$.

On the left side of the BAR \eqref{eq:BAR}, we have
\begin{align*}
	-\A f_{n,F}(x) &= \Norm{z_{\perp}}^{n}\left(1 + \sum_{j\in \J}  \I{z_j >0} \right) \psi_{M-n}  + (M-n) \Norm{z_{\perp}}^{n} \left( r_e^{M-n-1} + \sum_{j\in\J} r_{s,j}^{M-n-1} \I{z_j>0} \right) \psi_1 \\
	&\leq (J+1) \Norm{z_{\perp}}^{n} \psi_{M-n} + (M-n)  \Norm{z_{\perp}}^{n} \psi_1 \psi_{M-n-1} \leq \left( 2J + 4J^2M  \right) \Norm{z_{\perp}}^{n} \psi_{M-n},
\end{align*}
where the last inequality holds due to the property stated in  \eqref{eq: 2psi}. Hence, the left-hand side of the BAR \eqref{eq:BAR} becomes 
\begin{align}
	&-\E_{\pi}\left[ \A f\left(X^\uu\right) \right]  \leq (2J + 4J^2M)\E_{\pi}\left[ \Norm{Z_{\perp}^\uu}^{n} \psi_{M-n} \right] \leq (2J + 4J^2M )E_n, \label{eq: LHS res4}
\end{align}
where the final inequality follows from the induction hypothesis (S3) for $n$.

On the right-hand side of the BAR \eqref{eq:BAR} corresponding to the external arrival, conditional on the pre-jump state $x_-=(z_-,r_{-,e}, r_{-,s})$, we have
\begin{align*}
	&\E\left[ f_{n,F}(x_+)-f_{n,F}(x_-) \right] = \sum_{\ell\in \J} \E\left[ f_{n,F}(x_- + \Delta_{e,\ell}) \right]p_{\ell\mid z} - f_{n,F}(x_-) \notag \\
	& \quad = \left( \psi_{M-n} + \E\left[\left( {T_e}/{\alpha^\uu} \right)^{M-n} \right] \right) \left( \psi_1 + \E\left[ {T_e}/{\alpha^\uu} \right] \right) \sum_{\ell\in \J}  \Norm{z_{-,\perp} + \delta_\ell}^{n}p_{\ell\mid z} - \Norm{z_{-,\perp}}^{n} \psi_{M-n}\psi_1 \notag \\
	& \quad \geq \psi_{M-n} \left( \psi_1 + {1}/{\alpha^\uu} \right) \sum_{\ell\in \J} \left( \Norm{z_{-,\perp} + \delta_\ell}^{n} - \Norm{z_{-,\perp}}^{n} \right) p_{\ell\mid z} + \Norm{z_{-,\perp}}^{n} \psi_{M-n} /{\alpha^\uu} \notag \\
	& \quad = \Theta(\norm{z_{-,\perp}}^{n-1}\psi_{M-n+1}) + \Norm{z_{-,\perp}}^{n} \psi_{M-n} /{\alpha^\uu}, 
\end{align*}
where the second equality follows from the fact that $T_e$ and $X^\uu_-$ are independent in Lemma \ref{lmm: independent}, and the last inequality follows from \eqref{eq: prop_e2}. Taking the expectation under $\mathbb{P}_e$, the right-hand side of \eqref{eq:BAR} that corresponds to the external arrival becomes
\begin{align}
	&\E_{e}\left[ f_{n,F}(X^\uu_+)-f_{n,F}(X^\uu_-) \right] = \E_{e}\left[ \E\left[ f_{n,F}(x_+)-f_{n,F}(x_-) \mid X_-^\uu =x_-\right] \right] \notag \\
	&~ = \E_{e}\left[ \Theta(\norm{Z_{-,\perp}^\uu}^{n-1}\psi_{M-n+1}) \right] +   \E_{e}\left[ \Norm{Z_{-,\perp}^\uu}^{n} \psi_{M-n} \right]/{\alpha^\uu} \geq -\Theta_7 +  \E_{e}\left[ \Norm{Z_{-,\perp}^\uu}^{n} \psi_{M-n} \right]/{\alpha^\uu}, \label{eq: RHS1 res4}
\end{align}
where the last inequality holds by the induction hypothesis (S4) for $n-1$, and \(\Theta_7\) is a positive constant independent of \(r\).

On the right-hand side of the BAR \eqref{eq:BAR} corresponding to the service completion at station $\ell$, conditional on the pre-jump state $x_-=(z_-,r_{-,e}, r_{-,s})$, the increment of $f_{n,F}$ is
\begin{align*}
	&\E\left[ f_{n,F}(x_+)-f_{n,F}(x_-) \right] = \E\left[ f_{n,F}(x_- + \Delta_{s,\ell}) \right] - f_{n,F}(x_-) \\
	& = \left( \psi_{M-n} + \E\left[\left( {T_{s,\ell}}/{\mu_\ell} \right)^{M-n} \right] \right)\left( \psi_1 + \E\left[ {T_{s,\ell}}/\mu_\ell \right] \right) \Norm{z_{-,\perp} - \delta_\ell}^{n} - \Norm{z_{-,\perp}}^{n} \psi_{M-n}\psi_1 \\
	& \geq \psi_{M-n} \left( \psi_1 + 1/{\alpha^\uu}  \right) \left( \Norm{z_{-,\perp} - \delta_\ell}^{n} - \Norm{z_{-,\perp}}^{n} \right) + \Norm{z_{-,\perp}}^{n} \psi_{M-n} / \mu_\ell \\
	& = \Theta(\norm{z_{-,\perp}}^{n-1}\psi_{M-n+1}) + \Norm{z_{-,\perp}}^{n} \psi_{M-n} /{\mu_\ell},
\end{align*}
where the second equality follows from the fact that $T_{s,\ell}$ and $X^\uu_-$ are independent in Lemma \ref{lmm: independent}, and the last equality follows from \eqref{eq: prop_s2}. Taking the expectation under $\mathbb{P}_{s,\ell}$, the right-hand side of the BAR \eqref{eq:BAR} that corresponds to the service completion at station $\ell$ becomes
\begin{align}
	&\E_{s,\ell}\left[ f_{n,F}(X^\uu_+)-f_{n,F}(X^\uu_-) \right] = \E_{s,\ell}\left[ \E\left[ f_{n,F}(x_+)-f_{n,F}(x_-) \mid X_-^\uu =x_-\right] \right] \notag \\
	&~ = \E_{s,\ell}\left[ \Theta(\norm{Z_{-,\perp}^\uu}^{n-1}\psi_{M-n+1}) \right] +  \E_{s,\ell}\left[ \Norm{Z_{-,\perp}^\uu}^{n} \psi_{M-n} \right]/\mu_\ell \geq -\Theta_8 +   \E_{s,\ell}\left[ \Norm{Z_{-,\perp}^\uu}^{n} \psi_{M-n} \right]/\mu_\ell, \label{eq: RHS2 res4}
\end{align}
where the last inequality holds by the induction hypothesis (S4) for $n-1$, and \(\Theta_8\) is a positive constant independent of \(r\).

In summary, combining \eqref{eq: LHS res4}, \eqref{eq: RHS1 res4} and \eqref{eq: RHS2 res4}, the BAR \eqref{eq:BAR} implies that
\begin{align*}
	[2J + 4J^2M ]E_n \geq \alpha^\uu \Theta_7 +  \E_{e}\left[\Norm{Z_{-,\perp}^\uu}^{n} \psi_{M-n}    \right] + \sum_{\ell\in\J} \lambda^\uu_\ell \left( \Theta_8 +\frac{1}{\mu_\ell} \E_{s,\ell}\left[\Norm{Z_{-,\perp}^\uu}^{n} \psi_{M-n}    \right] \right)\\
	\geq - \alpha^\uu \left( \Theta_7 + \Theta_8 \right) +  \E_{e}\left[\Norm{Z_{-,\perp}^\uu}^{n} \psi_{M-n}    \right] + \sum_{\ell\in\J} \frac{\mu_\ell - r}{\mu_\ell} \E_{s,\ell}\left[\Norm{Z_{-,\perp}^\uu}^{n} \psi_{M-n}    \right],
\end{align*}
where the last inequality follows from $\lambda^\uu_\Sigma = \alpha^\uu$ and \eqref{eq: lambda}.

Therefore, we have
\begin{align*}
	\E_{e}\left[ \Norm{Z_{-,\perp}^\uu}^{n}  \psi_{M-n}  \right] + \sum_{\ell\in\J}  \E_{s,\ell}\left[\Norm{Z_{-,\perp}^\uu}^{n} \psi_{M-n}  \right] & \leq \frac{(2J + 4J^2M )E_n +\alpha^\uu \left( \Theta_7 + \Theta_8 \right) }{\min_{\ell \in \J, r\in (0,r_0)}\frac{\mu_\ell - r}{ \mu_{\ell}}}\\
	& \leq \frac{(2J+4J^2M)E_n + \left( \Theta_3 + \Theta_4 \right) \mu_{\Sigma} }{\frac{\mu_{\min}-r_0}{\mu_{\max}}},
\end{align*}
which completes the proof of (S4) for the induction step $n$. 

\subsection{Proof of Non-integer Case when $\varepsilon>0$} \label{sec: non int}

To prove Proposition \ref{prop: ssc high} when $\varepsilon>0$, it is essential to introduce the following lemma, which aligns directly with Statements (S1)-(S4) in Section \ref{sec: sketch ssc} by replacing $M$ with $\beta\equiv M + \varepsilon / (M+\varepsilon)$ and substituting $n$ with $\beta-1$.

\begin{lemma}\label{prop:general}
Under the moment condition \eqref{eq: moment condition high}, there exist positive and finite constants $C_{\beta-1}$, $D_{\beta-1}$, $E_{\beta-1}$ and $F_{\beta-1}$ independent of $\cc$ such that the following statements hold for all $\cc\in (0,r_0)$: 
\begin{itemize}
	\item[(1)] $\E_{\pi}[\norm{Z_\perp^\uu}^{\beta-1}] \leq C_{\beta-1}$.
	\item[(2)] $\E_{e}[\norm{Z_{-,\perp}^\uu}^{\beta-1}]+\sum_{\ell\in \J}\E_{ {s,\ell}}[\norm{Z_{-,\perp}^\uu}^{\beta-1}] \leq D_{\beta-1}$. 
	\item[(3)] $\E_{\pi}[\norm{Z_\perp^\uu}^{\beta-1}\psi_{1}(R_e^\uu, R_s^\uu)] \leq E_{\beta-1}$.
	\item[(4)] $\E_{e}[\norm{Z_{-,\perp}^\uu}^{\beta-1}\psi_{1}(R_{-,e}^\uu, R_{-,s}^\uu)]+ \sum_{\ell\in \J}\E_{ {s,\ell}}[\norm{Z_{-,\perp}^\uu}^{\beta-1}\psi_{1}(R_{-,e}^\uu, R_{-,s}^\uu)] \leq F_{\beta-1}$.
\end{itemize}
\end{lemma}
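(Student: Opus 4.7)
The proof follows the blueprint of the integer-case induction (Sections \ref{sec: proof of S1}-\ref{sec:proofS4}), carried out one additional time with the fractional exponent $\beta-1$ in place of the integer $n$. Since the moment condition \eqref{eq: moment condition high} with $\varepsilon > 0$ is strictly stronger than the $M+1$-moment condition used in the integer case, the proof of Proposition \ref{prop: ssc high} with $\varepsilon = 0$ already establishes (S1)-(S4) for $n = 0, 1, \ldots, M$, and these serve as the ``lower-order'' hypotheses throughout. Because $\beta - 1 \in [M-1, M)$ and $\beta - 2 \in [M-2, M-1)$ (the latter possibly negative only when $M = 1$), every non-negative fractional exponent $\gamma$ arising in the estimates admits the pointwise bound $x^\gamma \leq 1 + x^{\lceil \gamma \rceil}$, so expectations of the form $\E[\norm{Z_\perp^\uu}^\gamma(\cdots)]$ can be dominated using the integer-order bounds at hand.

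The four statements are obtained by applying BAR \eqref{eq:BAR} to the fractional analogues of the test functions \eqref{eq: func_s1}-\eqref{eq: func_s4}, namely
\begin{align*}
    f(x)   &= \tfrac{1}{\beta}\norm{z_\perp}^\beta - z_\perp^\T u(z)\,\alpha^\uu r_e\,\norm{z_\perp}^{\beta-2} + z_\perp^\T(\mu\circ r_s)\,\norm{z_\perp}^{\beta-2}, \\
    f_D(x) &= \norm{z_\perp}^{\beta-1}\psi_1(r_e, r_s), \quad f_E(x) = \norm{z_\perp}^{\beta-1}\psi_2(r_e, r_s), \quad f_F(x) = \norm{z_\perp}^{\beta-1}\psi_1(r_e, r_s)^2,
\end{align*}
which are the direct substitutions $M \mapsto \beta$ and $n \mapsto \beta - 1$ into \eqref{eq: func_s1}-\eqref{eq: func_s4}. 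For each test function, the left-hand side of BAR reproduces the integer-case computation: exactly as in \eqref{eq: LHS res}, $-\E_\pi[\A f(X^\uu)] \geq \tfrac{\mu_{\min}}{2\sqrt{J}}\E_\pi[\norm{Z_\perp^\uu}^{\beta-1}]$ gives statement (1), while (2)-(4) follow from analogous LHS lower bounds mirroring Sections \ref{sec: proofS2}-\ref{sec:proofS4}. The Palm-increment contributions on the right-hand side involve jumps of $\psi_p$ that produce only $(T_e/\alpha^\uu)^p$ or $(T_{s,\ell}/\mu_\ell)^p$ for $p \in \{1, 2\}$, both integrable by \eqref{eq: moment condition high}; the remaining cross terms are controlled by H\"older's inequality together with the integer-case bounds.

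The principal obstacle is replacing the integer Taylor expansions \eqref{eq: Taylor0}-\eqref{eq: Taylor3} with fractional-power versions. When $\beta - 1 \geq 1$ (i.e., $M \geq 2$), applying the mean-value theorem to $\nabla\norm{z}^{\beta-1} = (\beta-1)\norm{z}^{\beta-3}z$ yields $\bigl|\norm{z+\delta}^{\beta-1} - \norm{z}^{\beta-1}\bigr| \leq (\beta-1)\max(\norm{z}, \norm{z+\delta})^{\beta-2}\norm{\delta}$, which together with $\norm{\delta_\ell} \leq 1$ produces an estimate of the form $C(\norm{Z_{-,\perp}^\uu}^{\beta-2} + 1)$ in exact parallel with the integer case. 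When $\beta - 1 \in (0, 1)$ (i.e., $M = 1$), the map $x \mapsto x^{\beta-1}$ is merely H\"older continuous, so we use instead the sub-additive bound $\bigl|\norm{z+\delta}^{\beta-1} - \norm{z}^{\beta-1}\bigr| \leq \norm{\delta}^{\beta-1} \leq 1$, which sidesteps the singular factor $\norm{z}^{\beta-2}$ entirely. A secondary technicality is that the test function for (1) is not naturally defined at $z_\perp = 0$ when $\beta < 2$; this is resolved by the regularization $\norm{z_\perp}^{\beta-2} \mapsto (\norm{z_\perp}^2 + \eta)^{(\beta-2)/2}$ followed by $\eta \downarrow 0$, after which the standard truncation in the residual variables as in \citet{GuanChenDai2024} renders the test functions admissible in $\D$.
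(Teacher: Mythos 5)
There is a genuine gap in how you handle statements (3) and (4). Your plan is to re-run the induction step once more at the fractional level, using the test functions $f_E=\norm{z_\perp}^{\beta-1}\psi_2$ and $f_F=\norm{z_\perp}^{\beta-1}\psi_1^2$ and closing the resulting Palm terms ``by H\"older's inequality together with the integer-case bounds.'' This cannot be closed. The Palm-side terms produced by these test functions have total order strictly larger than $M$: for $M\geq 2$ the increment bound leaves you with quantities such as $\E_{e}\bigl[\norm{Z_{-,\perp}^\uu}^{\beta-2}\psi_2\bigr]$, of total order $\beta>M$, while every bound available from the integer case ($C_n,D_n,E_n,F_n$, and in particular $\E_{e}[\norm{Z_{-,\perp}^\uu}^{n}\psi_{M-n}]\leq F_n$ and $\E_e[\psi_M]\lesssim F_0$) has total order at most $M$; no choice of H\"older exponents can manufacture an order-$\beta$ bound from order-$\leq M$ ingredients. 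Worse, in the case that actually matters for the main theorem ($M=1$, $\varepsilon=\delta_0\in(0,1)$), your $f_E$ and $f_F$ involve $\psi_2$ of the \emph{residual} times, and $\E_{\pi}[\psi_2]$, $\E_e[\psi_2]$ require third moments of the interarrival/service times (the stationary residual of a renewal process satisfies $\E[R^2]\propto\E[T^3]$), which are not implied by the assumed condition \eqref{eq: moment condition high}. Your remark that the jumps only produce $(T_e/\alpha^\uu)^p$, $(T_{s,\ell}/\mu_\ell)^p$ with $p\in\{1,2\}$ addresses the jump variables $T$, not the pre-jump residuals multiplying them, which is where the missing moments sit.

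The paper's proof avoids all of this and is much lighter: statements (1) and (2) are immediate from the integer-case (S1), (S2) at $n=M$ because $\beta-1<M$ (no new BAR argument), and (3), (4) follow from a single H\"older split, $\E_{\pi}[\norm{Z_\perp^\uu}^{\beta-1}\psi_1]\leq\E_{\pi}[\norm{Z_\perp^\uu}^{M}]^{1-\frac{1}{M+\varepsilon}}\E_{\pi}[\psi_1^{M+\varepsilon}]^{\frac{1}{M+\varepsilon}}$ (and its Palm analogue), which is exactly why $\beta$ is defined as $M+\varepsilon/(M+\varepsilon)$. The only new BAR work is Lemma \ref{lmm: general}: a re-run of the base step of Section \ref{sec: base} with $M$ replaced by $M+\varepsilon$ in the test functions \eqref{eq: func_s3_base}--\eqref{eq: func_s4_base}, yielding the fractional residual moments $\E_{\pi}[\psi_{M+\varepsilon}]\leq A_1$ and $\E_e[\psi_{M+\varepsilon}]+\sum_\ell\E_{s,\ell}[\psi_{M+\varepsilon}]\leq A_2$ under exactly the assumed moments. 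Your proposal is missing this ingredient (or any substitute for it), and without it the fractional-level cross terms in your BAR runs cannot be bounded; the full fractional BAR argument is needed only later, in the proof of Proposition \ref{prop: ssc high} at level $\beta$, where it uses (1)--(4) as hypotheses. (Your regularization of $\norm{z_\perp}^{\beta-2}$ at $z_\perp=0$ is also unnecessary, since $\A$ differentiates only in the residual variables and $z$ ranges over a discrete set.)
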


To prove Statements (1)-(4), we need to introduce the following lemma, which directly corresponds to the base step of Statements ({S3})-({S4}) when $n=0$.

\begin{lemma}\label{lmm: general}
    Under the moment condition \eqref{eq: moment condition high}, there exist positive and finite constants $A_1$ and $A_2$ independent of $\cc$ such that the following statements hold for all $\cc\in (0,r_0)$: 
    \begin{align}
        &\E_{\pi} \left[\psi_{M+\varepsilon}\left(R_e^{(r)},R_s^{(r)}\right) \right] \leq A_1 \label{eq: S3 general}\\
        & \E_{e}\left[  \psi_{M+\varepsilon}\left(R_{-,e}^{(r)},R_{-,s}^{(r)}\right)\right]+ \sum_{\ell \in \J} \E_{s,\ell}\left[  \psi_{M+\varepsilon}\left(R_{-,e}^{(r)},R_{-,s}^{(r)}\right)\right] \leq A_2\label{eq: S4 general}
    \end{align}
\end{lemma}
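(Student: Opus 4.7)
The plan is to mirror the base-step ($n=0$) arguments for Statements (S3) and (S4) carried out in Section \ref{sec: base}, replacing the integer exponent $M$ by the real exponent $M+\varepsilon$ throughout. For \eqref{eq: S3 general}, I would apply the BAR \eqref{eq:BAR} with the test function $f(x) = \psi_{M+\varepsilon+1}(r_e, r_s)$. Since $M+\varepsilon+1 \geq 2$, the map $u\mapsto u^{M+\varepsilon+1}$ is $C^1$ on $\R_+$, and $-\A f(x) = (M+\varepsilon+1)\bigl(r_e^{M+\varepsilon} + \sum_{j\in\J} r_{s,j}^{M+\varepsilon}\I{z_j>0}\bigr)$. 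By Lemma \ref{lmm: independent}, the jump increment of $f$ under $\mathbb{P}_e$ equals $(T_e/\alpha^\uu)^{M+\varepsilon+1}$ and under $\mathbb{P}_{s,\ell}$ equals $(T_{s,\ell}/\mu_\ell)^{M+\varepsilon+1}$, both having finite expectation by the moment condition \eqref{eq: moment condition high}. Combining with $\alpha^\uu\geq \mu_{\max}$ and $\lambda_\ell^\uu \leq \mu_\ell$ produces a uniform bound on $\E_\pi[r_e^{M+\varepsilon} + \sum_j r_{s,j}^{M+\varepsilon}\I{Z_j^\uu > 0}]$. Exactly as in Section \ref{sec: base}, the missing contribution on $\{Z_j^\uu = 0\}$ is recovered via $R_{s,j}^\uu \I{Z_j^\uu = 0} \overset{d}{=} (T_{s,j}/\mu_j)\I{Z_j^\uu=0}$ and H\"older's inequality with exponent $(M+\varepsilon)/(M+1+\varepsilon)$, yielding \eqref{eq: S3 general}.

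For \eqref{eq: S4 general}, I would use the test function $f(x) = \psi_{M+\varepsilon}(r_e, r_s)\psi_1(r_e, r_s)$. Computing $-\A f$ produces a non-negative contribution $\psi_{M+\varepsilon}(1 + \sum_{j\in\J}\I{z_j > 0})$ plus a cross-term of order $\psi_{M+\varepsilon-1}\psi_1$, and the pointwise inequality \eqref{eq: 2psi} extends verbatim to any real exponent $\beta \geq 1$ (its proof only uses $r_{\max}^\beta \leq \psi_\beta \leq (J+1) r_{\max}^\beta$), so this cross-term is dominated by $4J^2 \psi_{M+\varepsilon}$. Combined with the already-established \eqref{eq: S3 general}, the LHS of BAR is uniformly bounded in $r$. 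On the Palm side, the computation in Section \ref{sec: base} transfers verbatim: under $\mathbb{P}_e$ the increment dominates $\psi_{M+\varepsilon}(R_{-,e}^\uu, R_{-,s}^\uu)\, T_e/\alpha^\uu$ after dropping non-negative higher-order terms in $T_e$, and Lemma \ref{lmm: independent} together with $\E[T_e]=1$ produces the factor $1/\alpha^\uu$; analogously for $\mathbb{P}_{s,\ell}$. Invoking \eqref{eq: lambda} and the bound $(\mu_\ell-r)/\mu_\ell \geq \mu_{\min}/(2\mu_{\max})$, which holds on $r\in(0,\mu_{\min}/2)$, then delivers \eqref{eq: S4 general}.

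The main obstacle is purely technical: neither test function is bounded, so both lie outside $\D$ and BAR \eqref{eq:BAR} cannot be invoked directly. This is resolved by the standard truncation device alluded to in Section \ref{sec: sketch ssc} and used in \citet{GuanChenDai2024}: replace each monomial $u^{M+\varepsilon+1}$ by a smooth truncation at level $N$ (agreeing with $u^{M+\varepsilon+1}$ on $[0,N]$ and flat beyond), apply BAR to the bounded truncated function, and let $N\to\infty$. The limit passes under expectation by monotone convergence on the LHS of BAR, and by dominated convergence on the Palm side, the dominating integrable envelopes being supplied by the $(M+1+\varepsilon)$-moment assumption \eqref{eq: moment condition high} via Lemma \ref{lmm: independent}. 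The non-integer value of $\varepsilon$ introduces no additional difficulty, since $u\mapsto u^{M+\varepsilon+1}$ is $C^1$ on $\R_+$ with derivative extending continuously to $0$, so the derivative conditions in the definition of $\D$ are satisfied after truncation.
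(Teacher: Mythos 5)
Your proposal is correct and follows essentially the same route as the paper: the paper's proof of this lemma is simply to rerun the base-step ($n=0$) arguments for (S3) and (S4) from Section \ref{sec: base} with $M$ replaced by $M+\varepsilon$ in the test functions \eqref{eq: func_s3_base} and \eqref{eq: func_s4_base}, which is exactly what you do, including the recovery of the $\{Z_j^\uu=0\}$ contribution, the use of \eqref{eq: 2psi} and \eqref{eq: lambda}, and the truncation device the paper defers to \citet{GuanChenDai2024}. In fact your write-up supplies more detail (the $C^1$ remark for non-integer exponents and the limit passage in the truncation) than the paper's one-paragraph proof.
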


\begin{proof}[Proof of Lemma \ref{lmm: general}.]
	The proof of Lemma \ref{lmm: general} follows the same approach as presented in Section~\ref{sec: base}. Specifically, the proof of \eqref{eq: S3 general} can directly adopt the proof approach of Statement ({S3}) in Section~\ref{sec: base} by replacing $M$ with $M+\varepsilon$ in the test function \eqref{eq: func_s3_base}. Similarly, the proof of \eqref{eq: S4 general} can directly adopt the proof approach of Statement ({S4}) in Section~\ref{sec: base} by replacing $M$ with $M+\varepsilon$ in the test function \eqref{eq: func_s4_base}.
\end{proof}

\begin{proof}[Proof of Lemma \ref{prop:general}.]
    Since Proposition \ref{prop: ssc high} for integer cases when $\varepsilon=0$ implies that Statements ({S1}) and ({S2}) hold for $n=M$, Statements ({1}) and ({2}) also hold due to $\beta-1<M$. 
    Statement (3) can be established as follows:
    \begin{equation} \label{eq: proof S3 general}
        \E_{\pi}\left[ \Norm{Z_\perp^\uu}^{\beta-1} \psi_{1}\right] \leq \E_{\pi}\left[ \Norm{Z_\perp^\uu}^{M} \right]^{1-\frac{1}{M+\varepsilon}} \E_{\pi}\left[ \psi_{1}^{M+\varepsilon}\right]^{\frac{1}{M+\varepsilon}} \leq C_{M}^{1-\frac{1}{M+\varepsilon}}\cdot 2J A_1^{\frac{1}{M+\varepsilon}},
    \end{equation}
    where the first inequality holds due to Hölder's inequality, and the second inequality follows from Lemma \ref{lmm: general} and the fact that $\psi_{1}^{M+\varepsilon}  \leq   (2J R_{\max}^\uu )^{M+\varepsilon}  \leq (2J)^{M+\varepsilon}  \psi_{M+\varepsilon}$ with $R_{\max}^\uu\equiv \max(\{R_{e}^{(r)}\}\cup\{R_{s,j}^{(r)}: j\in \J\})$. Consequently, $E_{\beta-1}$ can be set as $2JC_{M}^{1-\frac{1}{M+\varepsilon}} A_1^{\frac{1}{M+\varepsilon}}$. Statement (4) follows the same argument as \eqref{eq: proof S3 general} by replacing $\pi$ to any Palm measure $\nu$ in $\{\mathbb{P}_{e}\}\cup \{\mathbb{P}_{s,\ell};\ell\in \J\}$:
    \begin{equation*}
        \E_{\nu}\left[ \Norm{Z_\perp^\uu}^{\beta-1} \psi_{1}\right] \leq \E_{\nu}\left[ \Norm{Z_\perp^\uu}^{\beta-1} \right]^{1-\frac{1}{M+\varepsilon}} \E_{\nu}\left[ \psi_{1}^{M+\varepsilon}\right]^{\frac{1}{M+\varepsilon}} \leq D_{M}^{1-\frac{1}{M+\varepsilon}}\cdot 2J A_2^{\frac{1}{M+\varepsilon}}.
    \end{equation*}
    Consequently, $F_{k,\beta-1}$ can be set as $4J^2D_{M}^{1-\frac{1}{M+\varepsilon}} A_2^{\frac{1}{M+\varepsilon}}$.  
\end{proof}

\begin{proof}[Proof of Proposition \ref{prop: ssc high}]
    The proof of Proposition \ref{prop: ssc high} parallels the approach in Section \ref{sec: proof of S1}, with only two changes: substituting $n$ with $\beta$ and replacing Statements ({S1})-({S4}) with Statements ({1})-({4}).
\end{proof}

\section{Proof of Steady-state Convergence in Proposition~\ref{prop: weak convergence}} \label{sec:weak}
In this section, we prove the weak convergence of the steady-state average queue length in Proposition \ref{prop: weak convergence}. For simplicity, we equivalently prove the weak convergence of the steady-state total queue length $r\sum_{j\in\J} Z_j^\uu \Rightarrow JZ^*$ as $r\rightarrow 0$. Our proof heavily follows the proof in \citet{BravDaiMiya2023}. 


\subsection{Test Function for the BAR}
In the transform method by \citet{HurtMagu2020}, they utilize the exponential function $g_\theta$ for any $\theta\leq 0$ as
\begin{equation*}
	g_\theta(z) = e^{\theta \sum_{j\in \J} z_j}, \quad z\in \mathbb{Z}_+^J.
\end{equation*}
Incorporating truncation on residual interarrival and service times for $t>0$, we define the test function $f^\uu_{\theta, t}$ for $\theta\leq 0$ and $t>0$ as
\begin{equation*}
	f^\uu_{\theta, t}(x) = g_\theta(z)e^{ - \xi_e(\theta, t) (\alpha^\uu r_e \wedge t^{-1}) - \sum_{j\in \J}  \xi_{s,j}(\theta, t) (\mu_j r_{s,j}\wedge t^{-1})}, ~ x = (z, r_e, r_s) \in \mathbb{S},
\end{equation*}
where $\xi_e(\theta, t)$ and $\xi_{s,j}(\theta, t)$ for $j\in \J$ are the solutions to
\begin{equation} \label{eq: eta_xi}
	e^{\theta} \E_{e} \left[ e^{-\xi_e(\theta, t)( T_e \wedge t^{-1})} \right] = 1, \quad e^{-\theta} \E_{s,j} \left[ e^{-\xi_{s,j}(\theta, t) (T_{s,j}\wedge t^{-1})} \right] = 1 \quad \text{ for } j\in \J.
\end{equation}
The functions $\xi_e(\theta, t)$ and $\xi_{s,j}(\theta, t)$ for $j\in \J$ are uniquely determined by \eqref{eq: eta_xi}, which can be proved in Lemma C.1 of \citet{BravDaiMiya2023}.
Hence, the jump terms in the BAR \eqref{eq:BAR} will be zero, i.e.,
\begin{equation*}
	\E_{e} \left[ f^\uu_{\theta, t}(X^\uu_+)-f^\uu_{\theta, t}(X^\uu_-) \right] = 0, \quad \E_{s,j} \left[ f^\uu_{\theta, t}(X^\uu_+)-f^\uu_{\theta, t}(X^\uu_-) \right] = 0 \quad \text{ for } j\in \J.
\end{equation*}
Therefore, applying the test function $f^\uu_{\theta, t}$ to the BAR \eqref{eq:BAR} yields
\begin{equation} \label{eq: expbar1}
	\E_{\pi}\left[ \A f^\uu_{\theta, t}(X^\uu) \right] = 0.
\end{equation}

We are now ready to define two sets of moment generating functions (MGFs) for $Z^\uu$ and $X^\uu$ as follows. For $Z^\uu$, we define, for each $\theta\leq 0$ and $r\in (0,1)$,
\begin{equation*}
	\phi^\uu(\theta) = \E_{\pi}\left[ g_\theta(Z^\uu) \right] \quad \phi^\uu_j(\theta) = \E_{\pi}\left[ g_\theta(Z^\uu) \mid Z^\uu_j=0 \right] \quad \text{ for } j\in \J,
\end{equation*}
Note that $\phi^\uu(r\eta)$, $\eta<0$, is the MGF of the scaled total queue length $r\sum_{j\in \J}{Z}^\uu_j$.

For $X^\uu$, we set $t= r^{1-\varepsilon_0}$ where $\varepsilon_0\in (0, \delta_0 / (1+\delta_0))$, which is consistent with SSC result in Proposition \ref{prop: ssc}, and define truncated MGFs as
\begin{equation} \label{eq: psi}
	\psi^\uu(\theta) = \E_{\pi}\left[ f^\uu_{\theta, r^{1-\varepsilon_0}}(X^\uu) \right], \quad \psi^\uu_j(\theta) = \E_{\pi}\left[ f^\uu_{\theta, r^{1-\varepsilon_0}}(X^\uu)  \mid Z_j^\uu=0 \right] \quad \text{ for } j\in \J.
\end{equation}

\subsection{Outline of the Proof}

In this section, we will prove the weak convergence of the average queue length in Proposition \ref{prop: weak convergence}. We will follow the steps of the proof in Section 7.1 of \citet{BravDaiMiya2023}.

Using the MGFs $\psi^\uu$ and $\psi^\uu_j$ of \eqref{eq: psi}, we derive an asymptotic BAR for $X^\uu$ as follows, whose proof depends on SSC and is provided in Section \ref{sec: lemmas}.
	\begin{lemma} \label{lmm: asymptotic BAR}
		Assume Assumptions \ref{ass: moment condition} and \ref{ass: stability} hold. Under the JSQ or Po2 policy, for each $\eta\leq0$, as $r\to 0$,
		\begin{align} \label{eq: expbar}
			&\left( \alpha^\uu \xi_e(r\eta, r^{1-\varepsilon_0}) + \sum_{j\in \J}  \mu_j \xi_{s,j}(r\eta, r^{1-\varepsilon_0})  \right) \psi^\uu(r\eta) \notag \\
			&\qquad - \sum_{j\in \J} \mu_j \mathbb{P}\left(Z_j^\uu=0\right)  \xi_{s,j}(r\eta, r^{1-\varepsilon_0}) \psi^\uu_j(r\eta) = o(r^2),
		\end{align}
  where $f(r)=o(r)$ denotes $\lim_{r\to 0} f(r)/r = 0$.
	\end{lemma}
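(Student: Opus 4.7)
}

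The plan is to start from the exact identity \eqref{eq: expbar1} with $\theta = r\eta$ and $t = r^{1-\varepsilon_0}$ and show that the difference between it and the left-hand side of \eqref{eq: expbar} is a truncation remainder of size $o(r^2)$. Applying the interior operator to $f^\uu_{\theta,t}$ and using $\partial_{r_e}(\alpha^\uu r_e \wedge t^{-1}) = \alpha^\uu \I{\alpha^\uu r_e < t^{-1}}$ (and analogously for each $r_{s,j}$), I would obtain
$$\A f^\uu_{\theta,t}(x) = -f^\uu_{\theta,t}(x)\Bigl[\xi_e \alpha^\uu \I{\alpha^\uu r_e < t^{-1}} + \sum_{j\in\J}\xi_{s,j}\mu_j \I{\mu_j r_{s,j} < t^{-1}}\I{z_j > 0}\Bigr],$$
with $\xi_e = \xi_e(r\eta,t)$ and $\xi_{s,j} = \xi_{s,j}(r\eta,t)$. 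Taking expectation under $\pi$ in \eqref{eq: expbar1} and splitting $\I{\cdot < t^{-1}} = 1 - \I{\cdot \geq t^{-1}}$ and $\I{Z_j^\uu > 0} = 1 - \I{Z_j^\uu = 0}$, the main terms recover the left-hand side of \eqref{eq: expbar} via the definitions of $\varphi^\uu$ and $\varphi^\uu_j$, leaving the residual $\xi_e \alpha^\uu \E_\pi[f^\uu_{\theta,t}(X^\uu) \I{\alpha^\uu R_e^\uu \geq t^{-1}}]$ plus analogous service-side terms, which I need to show are each $o(r^2)$.

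The estimate rests on two ingredients. First, $|\xi_e(r\eta,t)|, |\xi_{s,j}(r\eta,t)| = O(r)$ as $r\to 0$: differentiating \eqref{eq: eta_xi} in $\theta$ at $\theta = 0$ gives derivatives $\pm 1/\E[T \wedge t^{-1}]$, which tend to $\pm 1$ as $t \to \infty$, with the implicit-function Taylor remainder controlled uniformly in large $t$ by the second moments from Assumption \ref{ass: moment condition}. Second, on a truncation event such as $\{\alpha^\uu R_e^\uu \geq t^{-1}\}$ the factor $\exp(-\xi_e (\alpha^\uu R_e^\uu \wedge t^{-1})) = \exp(-\xi_e/t)$ satisfies $|\xi_e/t| = O(r\cdot r^{1-\varepsilon_0}) = o(1)$, so $f^\uu_{\theta,t} \leq 1+o(1)$ there (noting $g_{r\eta}(z) \leq 1$ since $\eta \leq 0$). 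Combined with the moment bound $\E_\pi[(R_e^\uu)^{1+\delta_0}] \leq A_1$ supplied by Lemma \ref{lmm: general} applied with $M=1$ and $\varepsilon = \delta_0$, Markov's inequality yields
$$\E_\pi\bigl[f^\uu_{\theta,t}(X^\uu)\I{\alpha^\uu R_e^\uu \geq t^{-1}}\bigr] \leq (1+o(1))\,\mathbb{P}\bigl(R_e^\uu \geq t^{-1}/\alpha^\uu\bigr) = O\bigl(r^{(1-\varepsilon_0)(1+\delta_0)}\bigr),$$
and the analogous bound for each service station via $\E_\pi[(R_{s,j}^\uu)^{1+\delta_0}] \leq A_1$.

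Multiplying each truncation probability by $|\xi_e|\alpha^\uu = O(r)$ (respectively $|\xi_{s,j}|\mu_j = O(r)$) gives remainders of order $O(r^{1+(1-\varepsilon_0)(1+\delta_0)}) = o(r^2)$ precisely because the choice $\varepsilon_0 \in (0, \delta_0/(1+\delta_0))$ forces $(1-\varepsilon_0)(1+\delta_0) > 1$. The main obstacle is this delicate exponent bookkeeping: the $2+\delta_0$ moment assumption yields only $\E_\pi[(R_e^\uu)^{1+\delta_0}] < \infty$ through the SSC machinery of Section \ref{sec: ssc}, and only the specific truncation scale $t = r^{1-\varepsilon_0}$ with $\varepsilon_0$ strictly below $\delta_0/(1+\delta_0)$ allows Markov's inequality to saturate the $o(r^2)$ budget. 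Verifying that the test function does not blow up on the truncation events (so no compensating factor of $r^{-1}$ enters) is the subtle technical point that makes the bookkeeping work.
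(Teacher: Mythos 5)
Your decomposition is exactly the paper's: plug $f^\uu_{r\eta,\,r^{1-\varepsilon_0}}$ into $\E_\pi[\A f]=0$, split the indicators $\I{\cdot<t^{-1}}$ and $\I{Z_j^\uu>0}$, identify the main term with the left side of \eqref{eq: expbar}, and reduce the lemma to showing the two truncation remainders are $o(r^2)$. Where you differ is that the paper disposes of those remainders by citing Lemma 8.2 of \citet{BravDaiMiya2023}, whereas you prove the bound self-containedly: $|\xi_e|,|\xi_{s,j}|=O(r)$, the test function is $1+o(1)$ on the truncation events, and Markov's inequality with the uniform bound $\E_\pi[(R_e^\uu)^{1+\delta_0}]+\sum_j\E_\pi[(R_{s,j}^\uu)^{1+\delta_0}]\le A_1$ from Lemma \ref{lmm: general} ($M=1$, $\varepsilon=\delta_0$, taking $\delta_0<1$ without loss of generality) gives remainders of order $r^{1+(1-\varepsilon_0)(1+\delta_0)}=o(r^2)$ precisely because $\varepsilon_0<\delta_0/(1+\delta_0)$ — this is the correct bookkeeping and is what the cited external lemma encapsulates, so your version buys a proof internal to the paper's own SSC machinery at the cost of redoing a known estimate. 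A few slips, none fatal: the interior operator gives $\A f = +f\,[\xi_e\alpha^\uu\I{\alpha^\uu r_e<t^{-1}}+\sum_j\xi_{s,j}\mu_j\I{\mu_j r_{s,j}<t^{-1}}\I{z_j>0}]$, not $-f[\cdots]$ (harmless since the identity is set to zero); on a truncation event the exponent is $|\xi_e|t^{-1}=O(r\cdot r^{\varepsilon_0-1})=O(r^{\varepsilon_0})$, not $O(r\cdot r^{1-\varepsilon_0})$ (still $o(1)$, so the conclusion stands); you should bound all $J+1$ truncated exponential factors by $1+o(1)$ (each exponent is at most $|\xi|t^{-1}=O(r^{\varepsilon_0})$), not only the one tied to the truncated coordinate; and the derivative heuristic for $\xi=O(r)$ should refer to the truncation level $t^{-1}\to\infty$ (i.e.\ $t\to0$), which in any case follows from the expansions \eqref{eq: eta_xi_expansion} already invoked in Step 2.
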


To rewrite \eqref{eq: expbar} in a more tractable form, we need asymptotic expansions of $\xi_e$ and $\xi_{s,j}$ for $j\in \J$. Specifically, for each fixed $\eta\leq 0$ and $j\in \J$, we have, as $r\to 0$,
	\begin{align}
		\xi_e(r\eta, r^{1-\varepsilon_0}) &= r\eta+\frac{1}{2}c_e^2 r^2\eta^2 + o(r^2),\quad \xi_{s,j}(r\eta, r^{1-\varepsilon_0}) = -r\eta+\frac{1}{2}c_{s,j}^2 r^2\eta^2 + o(r^2), \label{eq: eta_xi_expansion}
	\end{align}
	where $c_e^2$ and $c_{s,j}^2$ are the SCV of interarrival and service times, respectively. The proof of \eqref{eq: eta_xi_expansion} is given in Lemma 7.5 of \citet{BravDaiMiya2023}.
Therefore, due to $0\leq \mu_j-\lambda_j^\uu\leq r$ in \eqref{eq: lambda}, the asymptotic BAR \eqref{eq: expbar}, for each $\eta\leq 0$, as $r\to 0$, becomes
	\begin{align*}
		&\left( r\alpha^\uu\eta+\frac{1}{2}\alpha^\uu c_e^2 r^2\eta^2 + \sum_{j\in \J}   \left( -r\mu_j\eta+\frac{1}{2}\mu_jc_{s,j}^2 r^2\eta^2 \right)  \right) \psi^\uu(r\eta) \\
		&\qquad - \sum_{j\in \J}  \left( \mu_j- \lambda^\uu_j \right) \left( -r\eta \right) \psi^\uu_j(r\eta) = o(r^2).
	\end{align*}
	According to the heavy traffic condition in \eqref{eq: heavy traffic} and dividing by $r^2\eta$, we have, for each $\eta\leq 0$ and as $r\to 0$,
	\begin{align*}
		&\left( -1+\eta \left( \frac{1}{2}\alpha^\uu c_e^2  + \sum_{j\in \J}\frac{1}{2}\mu_jc_{s,j}^2\right)  \right)   \psi^\uu(r\eta) + \sum_{j\in \J} \frac{ \mu_j- \lambda^\uu_j }{r} \psi^\uu_j(r\eta) = o(1).
	\end{align*}

Using the SSC in Proposition \ref{prop: ssc} and the expansions in \eqref{eq: eta_xi_expansion}, we have, as $r\to 0$,
	\begin{equation*}
		\psi^\uu(r \eta) - \phi^\uu(r\eta) = o(1), \quad \psi^\uu_j(r \eta) - \phi_j^\uu (r \eta) = o(1) \quad \text{ for } j\in \J,
	\end{equation*}
	whose proof is given in (7.28) of \citet{BravDaiMiya2023}.
	Hence, we can replace $\psi^\uu$ and $\psi^\uu_j$ by $\phi^\uu$ and $\phi_j^\uu$ in the asymptotic BAR \eqref{eq: expbar}, which implies that
	\begin{equation*}
		\left( -1+\eta \left( \frac{1}{2}\alpha^\uu c_e^2  + \sum_{j\in \J}\frac{1}{2}\mu_jc_{s,j}^2\right)     \right)   \phi^\uu(r\eta) + \sum_{j\in \J} \frac{ \mu_j- \lambda^\uu_j }{r} \phi_j^\uu(r\eta) = o(1).
	\end{equation*}

According to SSC in Proposition \ref{prop: ssc}, we have the following lemma, and the proof is given in \ref{sec: lemmas}.
	\begin{lemma} \label{lmm: one}
		Assume Assumptions \ref{ass: moment condition} and \ref{ass: stability} hold. Under the JSQ or Po2 policy, we have 
		\begin{equation*}
			\lim_{r\to 0} \sum_{j\in \J} \frac{ \mu_j- \lambda^\uu_j }{r} \phi_j^\uu(r\eta) = 1.
		\end{equation*}
	\end{lemma}
	Hence, the limit of $\phi^\uu(r\eta)$ is
	\begin{equation*}
		   \lim_{r\to 0} \phi^\uu(r\eta) =   \frac{1}{1-\eta \left( \frac{1}{2}\alpha^\uu c_e^2  + \sum_{j\in \J}\frac{1}{2}\mu_jc_{s,j}^2\right) }   = \frac{1 }{ 1-\eta Jm } ,
	\end{equation*}
	which is the MGF of an exponential random variable with mean $Jm$, and $m$ is defined in \eqref{eq: mean}.

\subsection{Proof of Technical Lemmas} \label{sec: lemmas}

\begin{proof}[Proof of Lemma \ref{lmm: asymptotic BAR}]
Setting $\theta$ to $r\eta$, the BAR \eqref{eq: expbar1} implies that
\begin{align*}
	&\left( \alpha^\uu \xi_e(r\eta, r^{1-\varepsilon_0}) +  \sum_{j\in \J} \mu_j \xi_{s,j}(r\eta, r^{1-\varepsilon_0})  \right)\psi^\uu(r\eta)  - \sum_{j\in \J} \mu_j \mathbb{P}\left(Z_j^\uu=0\right)\xi_{s,j}(r\eta, r^{1-\varepsilon_0})   \psi^\uu_j(r\eta) \\
	& \quad +  \alpha^\uu  \xi_e(r\eta, r^{1-\varepsilon_0})  \E_{\pi}\left[ f_{r\eta, r^{1-\varepsilon_0}}(X^\uu)     \I{\alpha^\uu R_e^\uu > r^{\varepsilon_0-1}}     \right] \\
	& \quad +  \sum_{j\in \J} \mu_j \xi_{s,j}(r\eta, r^{1-\varepsilon_0})    \E_{\pi}\left[ f_{r\eta, r^{1-\varepsilon_0}}(X^\uu)   \I{Z^\uu_j>0} \I{\mu_j R^\uu_{s,j} > r^{\varepsilon_0-1}}  \right] = 0.
\end{align*}
Since the last two terms have the order of $o(r^2)$ by Lemma 8.2 of \citet{BravDaiMiya2023}, the proof is completed.
\end{proof}

\begin{proof}[Proof of Lemma \ref{lmm: one}]
	From the SSC in Proposition \ref{prop: ssc} and $\mathbb{P}( Z^\uu_j=0)=(\mu_j- \lambda^\uu_j)/\mu_j$ in \eqref{eq: lambda}, for each $j\in\J$, we have
	\begin{align*}
		\abs{1 - \phi_j^\uu(r\eta) } &= \E_{\pi}\left[1 - e^{r\eta \sum_{k\in \J}Z^\uu_k}  \mid Z^\uu_j=0 \right] \leq r\abs{\eta}\E_{\pi}\left[ \sum_{k\in \J}Z^\uu_k \mid Z^\uu_j=0 \right]\\
		& = J r\abs{\eta}\E_{\pi}\left[ \bar{Z}^\uu - Z^\uu_j \mid Z^\uu_j=0 \right] \leq J r\abs{\eta}\E_{\pi}\left[ \norm{Z_\perp^\uu} \mid Z^\uu_j=0 \right],
	\end{align*}
        where the first inequality follows from $1-e^{-x}\leq x$ for all $x\geq 0$.
        By $\sum_{j\in \J} (\mu_j - \lambda_j^\uu)=r$ and $\mathbb{P}( Z^\uu_j=0)=(\mu_j- \lambda^\uu_j)/\mu_j$ in \eqref{eq: lambda}, we have
        \begin{align*}
     &\abs{1 - \sum_{j\in \J} \frac{ \mu_j- \lambda^\uu_j }{r} \phi_j^\uu(r\eta)} = \sum_{j\in \J} \frac{ \mu_j- \lambda^\uu_j }{r}\abs{1 -  \phi_j^\uu(r\eta)}= \sum_{j\in \J} \frac{ \mu_j\Prob\left(Z^\uu_j = 0\right) }{r}\abs{1 -  \phi_j^\uu(r\eta)} \\
     & \leq \sum_{j\in \J} \frac{ \mu_j\Prob\left(Z^\uu_j = 0\right) }{r}J r\abs{\eta}\E_{\pi}\left[ \norm{Z_\perp^\uu} \mid Z^\uu_j=0 \right] = \sum_{j\in \J}\mu_j J \abs{\eta}\E_{\pi}\left[ \norm{Z_\perp^\uu} \I{Z^\uu_j=0} \right] \\
		& \leq   \sum_{j\in \J}\mu_j J \abs{\eta}\E_{\pi}\left[ \norm{Z_\perp^\uu}^{1+\varepsilon_0} \right]^{\frac{1}{1+\varepsilon_0}}\mathbb{P}\left( Z^\uu_j=0 \right)^{\frac{\varepsilon_0}{1+\varepsilon_0}}\leq   \sum_{j\in \J}\mu_j^{\frac{1}{1+\varepsilon_0}} J \abs{\eta}\E_{\pi}\left[ \norm{Z_\perp^\uu}^{1+\varepsilon_0} \right]^{\frac{1}{1+\varepsilon_0}}r^{\frac{\varepsilon_0}{1+\varepsilon_0}}\\
		&=o(1),
 \end{align*}
	where the second inequality holds due to Hölder's inequality and the final inequality follows from $\mathbb{P}( Z^\uu_j=0)=(\mu_j- \lambda^\uu_j)/\mu_j\leq r/\mu_j$ in \eqref{eq: lambda}.
\end{proof}

\bibliographystyle{ACM-Reference-Format}
\bibliography{reference}

\appendix

\section{Proof of Lemma \ref{lmm: prop_eta}} \label{sec: lemma}
In this section, we provide the proofs of Lemma \ref{lmm: prop_eta}, which is key to the proof of the induction hypotheses (S1)-(S4). We will first prove the JSQ case and then extend the proof to the Po2 case.

Our proof highly depends on the mean value theorem. For any $z\in \R^J$, the gradient of $\norm{z}^n$ is given by
\begin{equation*}
	\nabla \norm{z} = \frac{z}{\norm{z}}, \quad \nabla \norm{z}^n = n \norm{z}^{n-2}z.
\end{equation*}
Hence, the mean value theorem implies that for any $z\in \Z_+^J$ and $\delta\in \R^J$ with $\norm{\delta}<1$, there exists $\xi\in (0,1)$ such that
\begin{equation} \label{eq: diff}
	\norm{z_\perp + \delta}^n - \norm{z_\perp}^n = n \norm{z_\perp + \xi\delta}^{n-2} \left( z_\perp + \xi\delta \right)^\T \delta = \Theta(\norm{z_\perp}^{n-2})\Theta(\norm{z_\perp}) = \Theta(\norm{z_\perp}^{n-1}),
\end{equation}
where $z_\perp^\T \delta=\Theta(\norm{z_\perp})$ holds due to $\abs{z_\perp^\T \delta} \leq \norm{z_\perp} \norm{\delta}$.

\paragraph*{JSQ Policy}
Since the JSQ policy selects the queue with the shortest job, we have $z_j p_{j\mid z} = z_{\min}p_{j\mid z}$. Hence, \eqref{eq: prop_e} can be proved as follows. The mean value theorem implies that there exists $\xi_{e,j}\in (0,1)$ such that
\begin{align*}
	&\sum_{j\in \J}\frac{1}{n+1}\norm{z_{\perp}+\delta_j}^{n+1}p_{j\mid z}  - \frac{1}{n+1}\norm{z_{\perp}}^{n+1} - \norm{z_{\perp}}^{n-1} z_{\perp, \min}\\
	&\quad = \sum_{j\in \J}\left( \frac{1}{n+1}\norm{z_{\perp}+\delta_j}^{n+1}- \frac{1}{n+1}\norm{z_{\perp}}^{n+1} \right) p_{j\mid z}  - \norm{z_{\perp}}^{n-1} z_{\perp, \min}\\
	&\quad = \sum_{j\in \J} \norm{z_{\perp}+\xi_{e,j}\delta_j}^{n-1} \left( z_{\perp}+\xi_{e,j}\delta_j \right)^\T \delta_j p_{j\mid z}  - \norm{z_{\perp}}^{n-1} z_{\perp, \min}\\
	&\quad = \sum_{j\in \J} \norm{z_{\perp}+\xi_{e,j}\delta_j}^{n-1} \left( z_{\perp,\min}+\xi_{e,j}\norm{\delta_j}^2 \right) p_{j\mid z}  - \norm{z_{\perp}}^{n-1} z_{\perp, \min}\\
	&\quad = \sum_{j\in \J} \left( \norm{z_{\perp}+\xi_{e,j}\delta_j}^{n-1} - \norm{z_{\perp}}^{n-1}  \right)  z_{\perp,\min} p_{j\mid z} + \sum_{j\in \J} \norm{z_{\perp}+\xi_{e,j}\delta_j}^{n-1} \xi_{e,j}\norm{\delta_j}^2  p_{j\mid z}\\
	&\quad = O(\norm{z_{\perp}}^{n-2}) O(\norm{z_{\perp}})  + O(\norm{z_{\perp}}^{n-1}) = O(\norm{z_{\perp}}^{n-1}),
\end{align*}
where the third equality follows from the fact that $z_{\perp}^\T\delta_j = z_{\perp}^\T\e{j} = z_{\perp,j}$ and $z_{\perp,j}p_{j\mid z} = z_{\perp,\min}p_{j\mid z}$, and the fifth equality holds due to \eqref{eq: diff} and $z_{\perp,\min}\leq \norm{z_\perp}$. The proof of \eqref{eq: prop_s} is similar but more simpler. The mean value theorem implies that there exists $\xi_{s,j}\in (0,1)$ such that
\begin{align*}
	&\frac{1}{n+1}\norm{z_{\perp}-\delta_j}^{n+1} - \frac{1}{n+1}\norm{z_{\perp}}^{n+1} + \norm{z_{\perp}}^{n-1} z_{\perp, j} \\
	&\quad = \norm{z_{\perp}-\xi_{s,j}\delta_j}^{n-1}\left( z_{\perp}-\xi_{s,j}\delta_j \right)^\T \left( -\delta_j \right)+ \norm{z_{\perp}}^{n-1} z_{\perp, j}\\
	&\quad = \norm{z_{\perp}-\xi_{s,j}\delta_j}^{n-1}\xi_{s,j}\norm{\delta_j}^2-\norm{z_{\perp}-\xi_{s,j}\delta_j}^{n-1}z_{\perp,j}+ \norm{z_{\perp}}^{n-1} z_{\perp, j}\\
	&\quad =O(\norm{z_{\perp}}^{n-1}) + O(\norm{z_{\perp}}^{n-2}) O(\norm{z_{\perp}}) = O(\norm{z_{\perp}}^{n-1}),
\end{align*}
where the second equality follows from the fact that $z_{\perp}^\T\delta_j = z_{\perp}^\T\e{j} = z_{\perp,j}$, and the third equality holds due to \eqref{eq: diff} and $z_{\perp,j}\leq \norm{z_\perp}$.

The conclusions of \eqref{eq: prop_e2} and \eqref{eq: prop_s2} are the direct consequence of \eqref{eq: diff}, so we omit the proof here. Now, we are ready to prove \eqref{eq: coeffS1}. 
Since $\alpha^\uu = \mu_{\Sigma} - r$ in \eqref{eq: heavy traffic}, we have
\begin{align*}
	-\A f_n(x) &= -\eta_{n,e}(z_{\perp})  \alpha^\uu  + \sum_{j\in \J} \eta_{n,s,j}(z_{\perp}) \geq  -\left( \mu_\Sigma - r \right) \norm{z_{\perp}}^{n-1} \left( z_{\min} - \bar{z} \right)   + \sum_{j\in \J} \norm{z_{\perp}}^{n-1} \left(z_j- \bar{z}\right)\mu_j   \\
	&= \norm{z_{\perp}}^{n-1} \left( -r  \left(   \bar{z} - z_{\min} \right) -   \sum_{j\in \J} \mu_j  \left( z_{\min} - \bar{z} \right)   + \sum_{j\in \J} \left(z_j- \bar{z}\right)\mu_j \right)   \\
	&=  \norm{z_{\perp}}^{n-1} \left( -r  \left(   \bar{z} - z_{\min} \right)  + \sum_{j\in \J} \left(z_j-z_{\min} \right)\mu_j  \right)    \\
	&\geq \norm{z_{\perp}}^{n-1} \left( -r  \left(   z_{\max} - z_{\min} \right)  + \left( z_{\max} - z_{\min}\right) \mu_{\operatorname{argmax}_{k\in \J} z_{k}}   \right)  \\
	& = \norm{z_{\perp}}^{n-1} \left( \mu_{\min} - r \right) \left({z}_{\max}- z_{\min}\right)  \geq \frac{\mu_{\min}}{2\sqrt{J}} \Norm{z_{\perp}}^n ,
\end{align*}
where the first equality follows from the fact that
\begin{equation} \label{eq: z indicator}
	z_{\perp,j}   \I{z_j>0} = \left( z_j - \bar{z} \right)   \I{z_j>0} \geq    \left( z_j - \bar{z} \right)    =    z_{\perp,j},
\end{equation} the second inequality holds by $z_{\max} \equiv \max_{j\in \J}z_j \geq \bar{z}$ and neglecting all terms except the term $z_{\max}$, and the last inequality holds due to $r\in (0, r_0)$ with $r_0\equiv \mu_{\min}/2$ for JSQ policy and
\begin{equation} \label{eq: z_perp max min}
	\Norm{z_{\perp}} = \sqrt{\sum_{j\in \J} \left( z_{j} - \bar{z} \right)^2} \leq \sqrt{\sum_{j\in \J} \left( z_{\max} - z_{\min} \right)^2} = \sqrt{J} \left( z_{\max} - z_{\min} \right).
\end{equation}
Hence, \eqref{eq: coeffS1} holds for JSQ policy with $b = \mu_{\min}/2\sqrt{J}$.

\paragraph{Po2 Policy} Under the Po2 policy, the decision first selects two stations, \( j_1 \) and \( j_2 \) with probability \( p_{j_1j_2} \), and then routes the external arrival to the station with the minimum queue length \(\operatorname{argmin}_{j\in \{j_1,j_2\}} z_j\). Therefore, we have
\begin{align*}
	\sum_{j\in \J} z_{\perp,j}  p_{j|z} =\sum_{j\in \J} z_{j}  p_{j|z} - \bar{z} = \sum_{j_1<j_2} \min\{z_{j_1},z_{j_2}\} p_{j_1j_2} - \bar{z}.
\end{align*}
Applying $\min\{z_{\min}, z_{\max}\}= (z_{\min} + z_{\max})/2 - (z_{\max} - z_{\min})/2$ for the minimum and maximum queue lengths and $\min\{z_{j_1}, z_{j_2}\}\leq (z_{j_1} + z_{j_2})/2$ for all other $j_1,j_2\in \J$, we have
\begin{align}
	\sum_{j\in \J} z_{\perp,j}  p_{j|z} &\leq \sum_{j_1<j_2}  \frac{p_{j_1j_2}}{2}(z_{j_1} + z_{j_2}) - \frac{\min_{j_1,j_2\in \J} p_{j_1j_2}}{2}(z_{\max} - z_{\min})- \bar{z}\notag \\
	& = \sum_{j\in \J}  {p_{j}}z_{j}  - \frac{\min_{j_1,j_2\in \J} p_{j_1j_2}}{2}(z_{\max} - z_{\min}) - \bar{z} \leq \sum_{j\in \J}  {p_{j}}z_{\perp,j}  - \frac{p_{\min}^2}{\sqrt{J}}\Norm{z_{\perp}}, \label{eq: ub po2}
\end{align}
where the last inequality follows from $\min_{j_1,j_2\in \J} p_{j_1j_2}\leq 2p_{\min}^2$ and \eqref{eq: z_perp max min}. Note that the choice of $\eta_{n,e}(z_{\perp})$ is decided by \eqref{eq: ub po2}.

The mean value theorem implies that there exists $\xi_{e,j}\in (0,1)$ such that
\begin{align*}
	&\sum_{j\in \J}\frac{1}{n+1}\norm{z_{\perp}+\delta_j}^{n+1}p_{j\mid z}  - \frac{1}{n+1}\norm{z_{\perp}}^{n+1} - \eta_{n,e}(z_{\perp})\\
	&\quad = \sum_{j\in \J}\left( \frac{1}{n+1}\norm{z_{\perp}+\delta_j}^{n+1}- \frac{1}{n+1}\norm{z_{\perp}}^{n+1} \right) p_{j\mid z}  - \eta_{n,e}(z_{\perp})\\
	&\quad = \sum_{j\in \J} \norm{z_{\perp}+\xi_{e,j}\delta_j}^{n-1} \left( z_{\perp}+\xi_{e,j}\delta_j \right)^\T \delta_j p_{j\mid z}  - \eta_{n,e}(z_{\perp})\\
	&\quad = \sum_{j\in \J} \norm{z_{\perp}+\xi_{e,j}\delta_j}^{n-1} \xi_{e,j}\norm{\delta_j}^2  p_{j\mid z} + \sum_{j\in \J} \norm{z_{\perp}+\xi_{e,j}\delta_j}^{n-1} z_{\perp, j} p_{j\mid z}   - \eta_{n,e}(z_{\perp})\\
	&\quad = \sum_{j\in \J} \norm{z_{\perp}+\xi_{e,j}\delta_j}^{n-1} \xi_{e,j}\norm{\delta_j}^2  p_{j\mid z} + \sum_{j\in \J} \left( \norm{z_{\perp}+\xi_{e,j}\delta_j}^{n-1}  - \norm{z_{\perp}}^{n-1}\right) z_{\perp, j} p_{j\mid z} \\
	&\qquad + \sum_{j\in \J} \norm{z_{\perp}}^{n-1} z_{\perp, j} p_{j\mid z}   - \eta_{n,e}(z_{\perp})\\
	&\quad \leq \sum_{j\in \J} \norm{z_{\perp}+\xi_{e,j}\delta_j}^{n-1} \xi_{e,j}\norm{\delta_j}^2  p_{j\mid z} + \sum_{j\in \J} \left( \norm{z_{\perp}+\xi_{e,j}\delta_j}^{n-1}  - \norm{z_{\perp}}^{n-1}\right) z_{\perp, j} p_{j\mid z} \\
	&\quad =  O(\norm{z_{\perp}}^{n-1}) + O(\norm{z_{\perp}}^{n-2}) O(\norm{z_{\perp}})   = O(\norm{z_{\perp}}^{n-1}),
\end{align*}
where the third equality follows from the fact that $z_{\perp}^\T\delta_j = z_{\perp}^\T\e{j} = z_{\perp,j}$, the inequality holds by \eqref{eq: ub po2}, and the fifth equality holds due to \eqref{eq: diff} and $z_{\perp,\min}\leq \norm{z_\perp}$. 

The proofs of \eqref{eq: prop_s}, \eqref{eq: prop_e2} and \eqref{eq: prop_s2} are the same as the JSQ case, so we omit them here. Now, we are ready to prove \eqref{eq: coeffS1} for the Po2 policy. Since $\alpha^\uu = \mu_{\Sigma} - r$ in \eqref{eq: heavy traffic}, we have
\begin{align*}
	-\A f_n(x) &= -\eta_{n,e}(z_{\perp}) \cdot \alpha^\uu  + \sum_{j\in \J} \eta_{n,s,j}(z_{\perp})\\
	&\geq  -\left( \mu_\Sigma - r \right) \norm{z_{\perp}}^{n-1} \left( \sum_{j\in \J}  {p_{j}}z_{\perp,j}  - \frac{p_{\min}^2}{\sqrt{J}}\Norm{z_{\perp}} \right)   + \norm{z_{\perp}}^{n-1}\sum_{j\in \J}  z_{\perp,j}\mu_j   \\
	&= \left( \mu_\Sigma - r \right) \norm{z_{\perp}}^{n-1} \cdot \frac{p_{\min}^2}{\sqrt{J}}\Norm{z_{\perp}} -\left( \mu_\Sigma - r \right) \norm{z_{\perp}}^{n-1}  \sum_{j\in \J}  {p_{j}}z_{\perp,j}     + \norm{z_{\perp}}^{n-1}\sum_{j\in \J}  z_{\perp,j}\mu_j   \\
	&= \left( \mu_\Sigma - r \right) \norm{z_{\perp}}^{n-1} \cdot \frac{p_{\min}^2}{\sqrt{J}}\Norm{z_{\perp}} +r \norm{z_{\perp}}^{n-1}  \sum_{j\in \J}  {p_{j}}z_{\perp,j}    \\
	&\geq \left( \mu_\Sigma - r \right) \frac{p_{\min}^2}{\sqrt{J}}\Norm{z_{\perp}}^n - r \norm{z_{\perp}}^{n}   = \left(\mu_\Sigma - \left( 1 + \frac{\sqrt{J}}{p_{\min}^2} \right) r \right)  \frac{p_{\min}^2}{\sqrt{J}}\Norm{z_{\perp}} \\
	&\geq \frac{\mu_{\Sigma}}{2}  \frac{p_{\min}^2}{\sqrt{J}}\Norm{z_{\perp}}^n,
\end{align*}
where the first inequality holds due to \eqref{eq: z indicator}, the third equality follows from the fact that $p_{j} = \mu_j/\mu_\Sigma$, the second inequality holds by $z_{\perp,j} \geq -\norm{z_{\perp}}$, and the last inequality holds due to $r\in (0, r_0)$ where $r_0 \leq \mu_\Sigma/(1 + \frac{p_{\min}^2}{\sqrt{J}})$ for Po2 policy. Hence, \eqref{eq: coeffS1} holds for Po2 policy with $b = \mu_\Sigma p_{\min}^2/(2\sqrt{J})$ and the proof of Lemma \ref{lmm: prop_eta} is completed.

\end{document}